\numberwithin{equation}{section}
\newtheorem{theorem}{Theorem}[section]
\newtheorem{lemma}[theorem]{Lemma}
\theoremstyle{definition}
\newtheorem{remark}{Remark}[section]
\newcommand{\abs}[1]{\left\vert#1\right\vert}
\newcommand{\norm}[1]{\left\Vert#1\right\Vert}
\newcommand{\inner}[1]{\left(#1\right)}
\newcommand{\Rmnum}[1]{\expandafter\@slowromancap\romannumeral #1@}
\DeclareMathOperator\sgn{sgn}
\DeclareMathOperator\divg{div}
\DeclareMathOperator\curl{curl}
\title[Gevrey class regularity of solutions to the magnetohydrodynamics equations]
{On the Gevrey regularity of solutions\\
to the 3D ideal MHD equations}
\author{Feng Cheng and Chao-Jiang Xu}
\date{}
\address{\noindent \textsc{Feng Cheng, School of Mathematics and Statistics, Wuhan university 430072, Wuhan, P.R. China}}
\email{chengfengwhu@whu.edu.cn}
\address{\noindent \textsc{Chao-Jiang Xu,
School of Mathematics and Statistics, Wuhan university 430072, Wuhan, P.R. China\\
and\\
Universit\'e de Rouen, CNRS UMR 6085, Laboratoire de Math\'ematiques, 76801 Saint-Etienne du Rouvray, France}}
\email{Chao-Jiang.Xu@univ-rouen.fr}
\begin{document}

\keywords{Gevrey class, Incompressible magnetohydrodynamic equation, Analyticity}
\subjclass[2010]{35Q35,76B03,76W05}

\begin{abstract}
In this paper, similar to the incompressible Euler equation, we prove the propagation of the Gevrey regularity of solutions to the three-dimensional incompressible ideal magnetohydrodynamics (MHD) equations. We also obtain
an uniform estimate of Gevery radius for the solution of MHD equation.
\end{abstract}

\maketitle

\section{Introduction}
The three-dimensional (3D) incompressible ideal MHD equations on the torus $\mathbb{T}^3$ take the form,
\begin{equation}\label{1.1}
\left\{
\begin{aligned}
 &\frac{\partial u}{\partial t}+u\cdot\nabla u-h\cdot\nabla h+\nabla (p+\frac{1}{2}\abs{h}^2)=0,\ &x\in\mathbb{T}^3,\ t>0,\\
 &\frac{\partial h}{\partial t}+u\cdot\nabla h-h\cdot\nabla u=0,\ &x\in\mathbb{T}^3,\ t>0,\\
 &\nabla\cdot u=0,\quad \nabla\cdot h=0,\ &x\in\mathbb{T}^3,\ t\geq0,\\
 &u(x,0)=u_0(x),\ h(x,0)=h_0(x),\ &x\in\mathbb{T}^3,
\end{aligned}
\right.
\end{equation}
where $u(x,t)=(u_1, u_2, u_3)(x,t), h(x,t)=(h_1,h_2,h_3)(x,t)$, represent fluid velocity field, magnetic field at point $x=(x_1, x_2, x_3)\in\mathbb{T}^3$ at time $t$, and $p=p(x,t)$ represents the scalar pressure.
Note that the incompressibility $\nabla\cdot h=0$ needs only be required at $t=0$, and it then holds for all $t>0$. As for the classical Euler equation, we transform the equations \eqref{1.1} to the following form after taking curl operator on both sides,
\begin{equation}\label{1.2}
\left\{
\begin{aligned}
 &\frac{\partial \omega}{\partial t}+u\cdot\nabla \omega-h\cdot\nabla J=\omega\cdot\nabla u-J\cdot\nabla h, \\
 &\frac{\partial J}{\partial t}+u\cdot\nabla J-h\cdot\nabla \omega=\omega\cdot\nabla h-J\cdot\nabla u, \\
 &u=\mathcal{K}* \omega,\quad h=\mathcal{K}* J, \\
 &\omega|_{t=0}=\omega_0=\curl u_0,\quad J|_{t=0}=J_0=\curl h_0,
\end{aligned}
\right.
\end{equation}
where $\mathcal{K}$ is the three dimensional Biot-Savart kernel, $\omega=\nabla\times u$ and $J=\nabla\times h$ denote the vorticity and current density, see \cite{Wu}.

In magneto-fluid mechanics magnetohydrodynamics equations (MHD) describes the dynamics of electrically conducting fluids arising from plasmas, liquid metals, and salt water or electrolytes, see \cite{LL, ST}.
There is no global well-posedness for the incompressible MHD equations \eqref{1.1} in general case except for small pertubation near the trivial steady solution (see, for instance \cite{LLP} and \cite{YZ}). The local existence and uniqueness of ${H}^r$-solution, for $r>5/2$, of the Cauchy problem \eqref{1.2} was proved in \cite{SP} following the method of Temam \cite{T} and Kato and Lai \cite{KL}. Caflisch, Klapper and Steele \cite{CKS} extended the well-known Beal, Kato and Majda criterion \cite{BK} for incompressible Euler equations to the cases of incompressible ideal MHD equations. Precisely, they proved that if the maximal time of existence $T$ is finite, then
\begin{equation}\label{1.3}
\int_{0}^{T}\big(\norm{\omega(\cdot,t)}_{L^\infty}+\norm{J(\cdot,t)}_{L^\infty}\big)dt=\infty.
\end{equation}
For more work about the blow-up criterion, please refer to \cite{CCM,ZX} and reference therein.
In this paper we study the Gevrey class regularity of the ${H}^r$-solutions to equations \eqref{1.2} on the torus $\mathbb{T}^3$ using the Fourier space method introduced by Foias and Temam \cite{FT}. In that paper, the authors studied the Gevrey class regularity of Navier-Stokes equations and proved that the solutions are analytic in time with values in Gevrey class for initail data only in Sobolev space $H^1$ with divergence free.
Levermore and Oliver \cite{LO} applied this method to study the propagation of analyticity of the solutions to the so-called lake and great lake equations.
Later, Kukavica and Vicol \cite{KV1} improved the results of Levermore and Oliver by showing that the radius of space analyticity decays algebraically on $\exp{\int_0^t \|\nabla u^E(\cdot,s)\|_{L^\infty}ds}$, where $u^E$ is the solution of incompressible Euler equations.  The purpose of this paper is to generalize the results of Kukavica and Vicol to 3D incompressible ideal MHD equations.

When considering viscous and resistive incompressible MHD equations, Kim \cite{SK} had investigated the Gevrey class regularity of the strong solutions and proved a parallel result as Foias and Temam \cite{FT} on Navier-Stokes equations. For regularized MHD equations, Yu and Li \cite{YL} studied Gevrey class regularity of the strong solutions to the MHD-Leray-alpha equations and Zhao and Li \cite{ZL} studied analyticity of the global attractor of the so-called MHD-Voight equations following the method of \cite{KLT}. In the whole space $\mathbb{R}^3$, Wang and Li \cite{WL} studied the global existence of solutions to the viscous and resistive MHD equations in the so-called Lei-Li-Gevrey space and Weng \cite{W} studied the analyticity of solutions to the Hall-MHD equations.  However, these aforementioned works are mainly concerned the viscous and resistive MHD equations (or regularized MHD equations). We see no results of Gevrey class regularity for the ideal MHD equations yet by far, and this is the motivation of our work.

The paper is organized as follows. In Section \ref{Section 2}, we will give some notations and state our main results. In Section \ref{Section 3}, we first recall some known results and then give some lemmas which are needed to prove the main Theorem. In Section \ref{Section 4}, we finish the proof of Theorem \ref{Theorem 2.1}.

\section{Notations and Main Theorem}\label{Section 2}

In this section we will give some notations and function spaces which will be used throughout the following arguments. Throughout the paper, $C$ denotes a generic constant which may vary from line to line.

Let $r\geq 0$ be a constant. Denote by ${H}^r(\mathbb{T}^3)$ the mean zero vector function space of fractional Sobolev space,
\begin{equation*}
\begin{split}
  {H}^r(\mathbb{T}^3)=\bigg\{v(x)=\sum_{k\in\mathbb{Z}^3}\hat{v}_k e^{ik\cdot x} &: \hat{v}_0=0,\ \overline{\hat{v}_k}=\hat{v}_{-k}, \\
   & \norm{v}_{{H}^r}=(2\pi)^3 \sum_{k\in\mathbb{Z}^3}(1+\abs{k}^2)^{r}\abs{\hat{v}_k}^2<\infty\bigg\},
\end{split}
\end{equation*}
where $\hat{v}_k$ is the $k$-th vector Fourier coefficient defined by
$$
 \hat{v}_k=\int_{\mathbb{T}^3}v(x)e^{-i k\cdot x}dx,\quad i=\sqrt{-1}.
$$
The operator $\Lambda$ is defined as follows
$$
\Lambda v(x):=\sum_{k\in\mathbb{Z}^3\backslash\{0\}}\abs{k}_1 \hat{v}_k e^{ik\cdot x},
$$
here $v\in H^1(\mathbb{T}^3)$ we used the notation $\abs{k}_1=\abs{k_1}+\abs{k_2}+\abs{k_3}$. Let $m=1,2,3$, define $\Lambda_m$ and $H_m$ as follows,
$$
\Lambda_m v(x):=\sum_{k\in\mathbb{Z}^3\backslash\{0\},k_m\neq0}\abs{k_m} \hat{v}_k e^{ik\cdot x},\quad H_m v(x):=\sum_{k\in\mathbb{Z}^3\backslash\{0\},k_m\neq0}\sgn(k_m)\hat{v}_k e^{ik\cdot x},
$$
for all $v\in {H}^1(\mathbb{T}^3)$.

Let $s\geq1$ be a real number.
For any multi-index $\alpha=(\alpha_1,\alpha_2,\alpha_3)$ in $\mathbb{N}^3$, we denote $\abs{\alpha}=\alpha_1+\alpha_2+\alpha_3$.
Usually, we say that a smooth function $f(x)\in C^\infty(\mathbb{R}^3)$ is uniformly of Gevrey class s, if there exists $C,\tau>0$ such that
\begin{equation}\label{2.1}
 \abs{\partial^\alpha f(x)}\leq C\frac{\abs{\alpha}!^s}{\tau^{\abs{\alpha}}},
\end{equation}
for all $x\in\mathbb{R}^3$ and all multi-index $\alpha\in\mathbb{N}^3$. When $s=1$, $f$ is real analytic. The constant $\tau$ in \eqref{2.1} is called the radius of Gevrey class regularity. Inspired by Foias and Temam \cite{FT}, the Gevrey space on the torus can be characterized by the decay of the Fourier coefficients, see for instance \cite{KV1,LO}.

In this paper we inherit the notations of the function space of Gevrey class $s$ used in \cite{KV1}. For fixed $r,\tau\geq0$ and $m=1,2,3$, let
\begin{equation*}
  \mathscr{D}(\Lambda_m^r e^{\tau\Lambda_m^{1/s}})=\bigg\{v\in H^r(\mathbb{T}^3);\ \divg v=0,\ \norm{\Lambda_m^r e^{\tau\Lambda_m^{1/s}}v}_{L^2}<\infty\bigg\},
\end{equation*}
where
\begin{equation*}
  \norm{\Lambda_m^r e^{\tau\Lambda_m^{1/s}}v}_{L^2}=(2\pi)^3\sum_{k\in\mathbb{Z}^3}\abs{k_m}^{2r}e^{2\tau\abs{k_m}^{1/s}}\abs{\hat{v}_k}^2.
\end{equation*}
For $\tau,r\geq0$, set
$$
 X_{r,\tau,s}=\bigcap_{m=1}^3 \mathscr{D}(\Lambda_m^r e^{\tau \Lambda_m^{1/s}}),\quad \norm{v}_{X_{r,\tau,s}}^2=\sum_{m=1}^{3}\norm{\Lambda_m^r e^{\tau\Lambda_m^{1/s}}v}_{L^2}^2,
$$
and
$$
Y_{r,\tau,s}=X_{r+\frac{1}{2s},\tau,s}.
$$
The function spaces defined above are showed to be equivalent with the usual definition of Gevrey class $s$ and we still call the parameter $\tau$ the radius of Gevrey class s, see \cite{KV1, LO,SK} for detailed description.

With these notations, we can state our main results.

\begin{theorem}\label{Theorem 2.1}
Let $r> \frac{5}{2}+\frac{3}{2s}, s\geq1$ be fixed constants. If $(u_0, h_0)$ are divergence-free and $(\omega_0, J_0)=(\curl u_0,\curl h_0)\in X_{r,\tau_0,s}$ with $\tau_0>0$. Then the equation \eqref{1.2} admits a unique solution $(\omega, J)\in L^\infty([0, T[;H^r(\mathbb{T}^3) )$ such that,
\begin{equation*}
(\omega,J)\in L^\infty\big([0,T[,X_{r,\tau(\cdot),s}\big), 
\end{equation*}
where $0<T$ is the life-span of ${H}^r$-solution $(u,h)$ to equations \eqref{1.1}. Moreover the Gevery radius $\tau(t)$ is a decreasing function of $t$ with $\tau(0)=\tau_0$ and satisfies, for $0\leq t<T$,
\begin{equation*}
\tau(t)\geq \exp\bigg(-C\int_{0}^{t}(\norm{\nabla u(\cdot,\sigma)}_{L^\infty}+\norm{\nabla h(\cdot,\sigma)}_{L^\infty}d\sigma\bigg)\big(\tau_0^{-1}+C_0 t+\frac{C_1}{2}t^2\big)^{-1},
\end{equation*}
where $C>0$ is a constant depending only on $r, s$, while $C_0$
and $C_1$ have additional dependence on the initial data.
\end{theorem}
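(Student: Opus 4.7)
My plan is to adapt the Fourier-side Gevrey method of Foias--Temam \cite{FT}, developed for the inviscid Euler setting by Kukavica--Vicol \cite{KV1}, to the coupled velocity/magnetic system \eqref{1.2}. For each $m=1,2,3$, write $A_m(t):=\Lambda_m^{r}e^{\tau(t)\Lambda_m^{1/s}}$ and take the $L^2$ inner products of the first and second equations in \eqref{1.2} with $A_m^{2}\omega$ and $A_m^{2}J$ respectively, then sum over $m$. The time-derivative hitting the Gevrey multiplier produces the good term $-2\dot\tau(t)(\|\omega\|_{Y_{r,\tau,s}}^2+\|J\|_{Y_{r,\tau,s}}^2)$ once we require $\dot\tau<0$. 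Using $\nabla\cdot u=\nabla\cdot h=0$ and integration by parts, the highest-order contributions of $u\cdot\nabla\omega$ and $u\cdot\nabla J$ vanish, while summing the two identities kills the highest-order part of the coupling pair $-h\cdot\nabla J$ (in the $\omega$-equation) and $-h\cdot\nabla\omega$ (in the $J$-equation) by the standard MHD antisymmetry $\int h\cdot\nabla(A_mJ\cdot A_m\omega)=0$. What remains are the four commutators $[A_m,u\cdot\nabla]\omega$, $[A_m,u\cdot\nabla]J$, $[A_m,h\cdot\nabla]J$, $[A_m,h\cdot\nabla]\omega$ paired with $A_m\omega$ or $A_mJ$, together with the stretching contributions $A_m(\omega\cdot\nabla u-J\cdot\nabla h)$ and $A_m(\omega\cdot\nabla h-J\cdot\nabla u)$, which carry no obvious cancellation and must be estimated directly.

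Next I invoke the Gevrey product and commutator lemmas established in Section \ref{Section 3}. These rest on the subadditivity $|k+l|_{m}^{1/s}\leq|k|_{m}^{1/s}+|l|_{m}^{1/s}$ (valid for $s\geq1$), a Fourier-convolution split according to which frequency dominates, and the embedding $H^{r}\hookrightarrow L^\infty$ available thanks to $r>5/2+3/(2s)$. Schematically, they produce bounds of the form
\[
\sum_{m=1}^{3}\bigl|\langle A_m(f\cdot\nabla g),A_m h\rangle\bigr|\leq C(\|\nabla f\|_{L^\infty}+\|\nabla g\|_{L^\infty})\|g\|_{X_{r,\tau,s}}\|h\|_{X_{r,\tau,s}}+C\tau\|f\|_{X_{r,\tau,s}}\|g\|_{Y_{r,\tau,s}}\|h\|_{Y_{r,\tau,s}},
\]
and analogous bounds for the stretching terms. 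Summing all contributions yields the differential inequality
\[
\frac{d}{dt}\bigl(\|\omega\|_{X_{r,\tau,s}}^{2}+\|J\|_{X_{r,\tau,s}}^{2}\bigr)+2\bigl(|\dot\tau|-C\tau(\|\omega\|_{X_{r,\tau,s}}+\|J\|_{X_{r,\tau,s}})\bigr)\bigl(\|\omega\|_{Y_{r,\tau,s}}^{2}+\|J\|_{Y_{r,\tau,s}}^{2}\bigr)\leq C(\|\nabla u\|_{L^\infty}+\|\nabla h\|_{L^\infty})\bigl(\|\omega\|_{X_{r,\tau,s}}^{2}+\|J\|_{X_{r,\tau,s}}^{2}\bigr).
\]

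Finally, I prescribe $\dot\tau=-C\tau(\|\omega\|_{X_{r,\tau,s}}+\|J\|_{X_{r,\tau,s}})$, which absorbs the $Y$-norm bracket, and a Gronwall step on the residual inequality yields
\[
\|\omega\|_{X_{r,\tau,s}}^{2}+\|J\|_{X_{r,\tau,s}}^{2}\leq M_0\exp\Bigl(C\int_0^t(\|\nabla u\|_{L^\infty}+\|\nabla h\|_{L^\infty})d\sigma\Bigr),
\]
with $M_0:=\|\omega_0\|_{X_{r,\tau_0,s}}^{2}+\|J_0\|_{X_{r,\tau_0,s}}^{2}$. Substituting this control back into the ODE for $\tau$ and introducing the integrating factor $\mu(t):=\tau(t)\exp(C\int_0^t(\|\nabla u\|_{L^\infty}+\|\nabla h\|_{L^\infty})d\sigma)$ converts the problem into $\tfrac{d}{dt}(\mu^{-1})\leq C_0+C_1 t$, with $C_0,C_1$ depending only on $M_0$; integrating from $0$ to $t$ then delivers $\mu^{-1}(t)\leq\tau_0^{-1}+C_0 t+C_1 t^2/2$ and hence the claimed lower bound for $\tau(t)$. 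The main obstacle is the commutator estimate itself: because $e^{\tau\Lambda_m^{1/s}}$ is nonlocal of infinite order and has no classical Leibniz rule, one must carefully exploit the subadditivity above together with the elementary inequality $|e^{\tau a}-e^{\tau b}|\leq\tau e^{\tau\max(a,b)}|a-b|$ to extract simultaneously a prefactor $\tau$ and a half-derivative loss of order $\Lambda_m^{1/(2s)}$ distributed across the two other factors, so that the bad term is exactly absorbed by the $Y$-norm. It is precisely this sharp balance that makes the choice of $\dot\tau$ above natural and permits the final ODE to close.
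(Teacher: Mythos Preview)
Your overall architecture---energy identity with the Gevrey multiplier, exploitation of the divergence-free cancellations (including the MHD antisymmetry for the $h\cdot\nabla$ pair), commutator bounds, then an ODE for $\tau$---matches the paper. But there is a genuine gap in the structure of your commutator estimate and hence in your ODE for $\tau$, and as written the argument does \emph{not} yield the lower bound stated in the theorem.

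Your schematic estimate places a coefficient $C\tau\,\|\Psi\|_{X_{r,\tau,s}}$ in front of $\|\Psi\|_{Y_{r,\tau,s}}^2$ and leads you to prescribe $\dot\tau=-C\tau\,\|\Psi\|_{X_{r,\tau,s}}$. That is a \emph{linear} ODE in $\tau$: it integrates to $\tau(t)=\tau_0\exp\bigl(-C\int_0^t\|\Psi\|_{X_{r,\tau,s}}\bigr)$, and after feeding in your Gronwall bound $\|\Psi\|_{X_{r,\tau,s}}\leq\sqrt{M_0}\,G(t)^{1/2}$ you get a doubly exponential lower bound, not the algebraic one in the theorem. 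Your integrating-factor step is simply incorrect for this ODE: with $\mu=\tau G$ one computes $\tfrac{d}{dt}(\mu^{-1})=\bigl(C\|\Psi\|_{X_{r,\tau,s}}-C(\|\nabla u\|_{L^\infty}+\|\nabla h\|_{L^\infty})\bigr)\mu^{-1}$, which is still homogeneous in $\mu^{-1}$ and does not reduce to $\tfrac{d}{dt}(\mu^{-1})\leq C_0+C_1t$.

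What the paper actually proves (Lemmas~\ref{Lemma 3.4}--\ref{Lemma 3.6}, following the key idea of \cite{KV1}) is a \emph{sharper} splitting of the $Y$-coefficient:
\[
\dot\tau + C\tau\bigl(\|\nabla u\|_{L^\infty}+\|\nabla h\|_{L^\infty}\bigr) + C\tau^{2}\bigl(\|\Psi\|_{H^r}+\|\Psi\|_{X_{r,\tau,s}}\bigr)\leq 0,
\]
where the linear-in-$\tau$ piece carries only the $L^\infty$ gradient norms, and the Gevrey/Sobolev norms appear only at order $\tau^{2}$. This is a Bernoulli equation: setting $v=\tau^{-1}$ gives $\dot v = C(\|\nabla u\|_{L^\infty}+\|\nabla h\|_{L^\infty})\,v + C(\|\Psi\|_{H^r}+\|\Psi\|_{X_{r,\tau,s}})$, and \emph{now} the integrating factor $G^{-1}$ produces $\tfrac{d}{dt}(G^{-1}v)\leq C_0+C_1t$ after inserting the Gronwall bound on $\|\Psi\|_{X_{r,\tau,s}}$. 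The paper also carries an additional source term $(1+\tau)\|\Psi\|_{H^r}^{2}\|\Psi\|_{X_{r,\tau,s}}$ on the right-hand side of the $X$-inequality, controlled via the independent $H^r$ energy estimate \eqref{4.10+}; this is what makes $C_1$ depend on $\|\Psi_0\|_{H^r}^2$ and $\tau_0$.

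To fix your argument, you must refine the commutator estimate so that the coefficient of $\|\Psi\|_{Y_{r,\tau,s}}^2$ has the form $\tau\,(\|\nabla u\|_{L^\infty}+\|\nabla h\|_{L^\infty})+\tau^2\,(\cdots)$ rather than $\tau\,\|\Psi\|_{X_{r,\tau,s}}$. This requires using Plancherel to recognize certain convolution sums as physical-space inner products of the type $\langle\partial_m u\cdot\nabla H_m\Lambda_m^{r-1}e^{\tau\Lambda_m^{1/s}}\omega,\,\Lambda_m^r e^{\tau\Lambda_m^{1/s}}J\rangle$, which pull out a genuine $\|\nabla u\|_{L^\infty}$ factor; the remaining pieces, coming from second-order Taylor remainders in $e^{\tau|\ell_m|^{1/s}}-e^{\tau|k_m|^{1/s}}$, naturally carry a $\tau^2$ prefactor. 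Your one-line inequality $|e^{\tau a}-e^{\tau b}|\leq\tau e^{\tau\max(a,b)}|a-b|$ is too crude for this; you need the two-term expansion $e^\xi-1-\xi$ as in \eqref{3.7}--\eqref{3.13}.
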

\begin{remark}
In the case $s=1$ and $h=constant$, Theorem \ref{Theorem 2.1} recovers the result of Kukavica and Vicol \cite{KV1} for incompressible Euler equation. And we remarked that in the case $s=1$, we need only $r>\frac{7}{2}$ in Theorem \ref{Theorem 2.1}.
\end{remark}

\begin{remark}
The smooth solution criterion \eqref{1.3} in \cite{CKS} states that the solution remain smooth to $T$ as long as $\int_{0}^{T}(\norm{\omega(\cdot,t)}_{L^\infty}+\norm{J(\cdot,t)}_{L^\infty})dt<\infty$.
\end{remark}

\section{The estimate of the nonlinear terms}\label{Section 3}
In order to prove the main Theorem \ref{Theorem 2.1}, we recall the following results about the local existence and uniqueness of ${H}^r$-solution of the ideal MHD equations \eqref{1.1},
\begin{theorem}[Caflisch-Klapper-Steele, \cite{CKS}]\label{Theorem 3.2}
Let $r\geq3$. If $u_0, h_0\in {H}^{r}(\mathbb{T}^3)$ are divergence-free. Then equations \eqref{1.1} admit a unique solution $(u,h)$ such that
  \begin{equation*}
    (u,h) \in C\big([0,T[, {H}^{r}(\mathbb{T}^3)\big)\bigcap C^1\big([0,T[,{H}^{r-1}(\mathbb{T}^3)\big)
  \end{equation*}
where $0<T<\infty$ is the maximal existence time of ${H}^{r}$-solution, namely $T$ stasifies
\begin{equation*}
  \int_{0}^{T}\norm{\omega(\cdot,t)}_{L^\infty(\mathbb{T}^3
  )}+\norm{J(\cdot,t)}_{L^\infty(\mathbb{T}^3)}dt=\infty.
\end{equation*}
\end{theorem}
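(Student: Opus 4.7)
The plan is to prove Theorem~\ref{Theorem 3.2} by the standard symmetric hyperbolic strategy (Kato--Lai \cite{KL}, Temam \cite{T}) adapted to the MHD coupling, combined with a Beale--Kato--Majda-type log-Sobolev argument to obtain the blow-up criterion. First I would build approximate solutions by a vanishing-viscosity (or spectral Galerkin) scheme: add $\nu \Delta u$ and $\nu \Delta h$ to the two evolution equations of \eqref{1.1}, which yields for each $\nu>0$ a global smooth solution $(u^\nu, h^\nu)$ with divergence-free initial data $(u_0,h_0)\in H^r$. The core task is then to derive uniform-in-$\nu$ bounds on $(u^\nu,h^\nu)$ in $L^\infty_t H^r$ on a time interval $[0,T]$ independent of $\nu$.

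The key a priori estimate uses the symmetric structure of ideal MHD. Applying $\Lambda^r$ to both equations, pairing in $L^2$ with $\Lambda^r u$ and $\Lambda^r h$ respectively, and summing, one exploits two cancellations: (i) the usual transport cancellation $\int (u\cdot\nabla) \Lambda^r u \cdot \Lambda^r u \,dx = 0$ (and the analogue for $h$) coming from $\nabla\cdot u=0$; and (ii) the cross cancellation $\int (h\cdot\nabla \Lambda^r h)\cdot\Lambda^r u\, dx + \int (h\cdot\nabla \Lambda^r u)\cdot \Lambda^r h\, dx = 0$ from integration by parts with $\nabla\cdot h=0$, which is precisely why the magnetic pressure $\nabla(\tfrac12|h|^2)$ is grouped with $p$. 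The remaining terms are commutators, and the Kato--Ponce inequality
\[
\norm{[\Lambda^r, f]\,g}_{L^2}\leq C\pare{\norm{\nabla f}_{L^\infty}\norm{g}_{H^{r-1}} + \norm{f}_{H^r}\norm{g}_{L^\infty}},
\]
together with $r\geq 3$ (so that $H^{r-1}\hookrightarrow L^\infty$), yields
\[
\frac{d}{dt}\pare{\norm{u^\nu}_{H^r}^2+\norm{h^\nu}_{H^r}^2}\leq C\pare{\norm{\nabla u^\nu}_{L^\infty}+\norm{\nabla h^\nu}_{L^\infty}}\pare{\norm{u^\nu}_{H^r}^2+\norm{h^\nu}_{H^r}^2},
\]
with no dependence on $\nu$. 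A Gronwall / continuation argument then gives a common existence time $T_*>0$ and uniform $H^r$ bounds. Compactness (Aubin--Lions) on these bounds lets one pass to the limit $\nu\to 0$ to obtain a solution in $L^\infty([0,T_*[;H^r)\cap \mathrm{Lip}([0,T_*[;H^{r-1})$; the time-continuity into $H^r$ is then upgraded by the Bona--Smith trick (mollifying the initial data and using the energy identity) to get $C([0,T_*[;H^r)$, and the equations yield $C^1([0,T_*[;H^{r-1})$. Uniqueness follows from an $L^2$ estimate on the difference $(\bar u,\bar h)$ of two solutions; the same magnetic cross-cancellation kills the dangerous term, leaving $\frac{d}{dt}(\norm{\bar u}_{L^2}^2+\norm{\bar h}_{L^2}^2)\leq C(\norm{\nabla u}_{L^\infty}+\norm{\nabla h}_{L^\infty})(\norm{\bar u}_{L^2}^2+\norm{\bar h}_{L^2}^2)$ and Gronwall does the rest.

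For the blow-up criterion, let $T$ be the maximal existence time and suppose $\int_0^T (\norm{\omega}_{L^\infty}+\norm{J}_{L^\infty})\,dt=:M<\infty$. The BKM logarithmic Sobolev inequality applied to the Biot--Savart reconstructions $u=\mathcal{K}\ast\omega$ and $h=\mathcal{K}\ast J$ gives
\[
\norm{\nabla u}_{L^\infty}\leq C\bigl(1+\norm{\omega}_{L^\infty}\log(e+\norm{u}_{H^r})\bigr),
\]
and the analogous bound for $\norm{\nabla h}_{L^\infty}$ in terms of $\norm{J}_{L^\infty}$. Substituting into the $H^r$ energy inequality above and setting $Y(t)=\log(e+\norm{u}_{H^r}^2+\norm{h}_{H^r}^2)$ yields
\[
Y'(t)\leq C(\norm{\omega}_{L^\infty}+\norm{J}_{L^\infty})\,Y(t),
\]
so Gronwall gives $Y(t)\leq Y(0)\exp(CM)$ on $[0,T)$. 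The $H^r$ norm therefore stays bounded up to $T$, contradicting maximality, which proves that at a finite blow-up time the integral in \eqref{1.3} must diverge.

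The main obstacle is not the energy bookkeeping but obtaining the BKM-type log-Sobolev bound for \emph{both} $\nabla u$ in terms of $\omega$ and $\nabla h$ in terms of $J$ in a form that closes against the coupled energy estimate; one has to verify that the singular-integral structure of the 3D Biot--Savart operator gives the same logarithmic inequality for $h$ as for $u$ and that the double logarithm produced by plugging into a Gronwall inequality indeed still permits a single exponential bound. The other delicate point is the symmetric cancellation at top order: if one forgets to include $\tfrac12|h|^2$ in the pressure, the cross term $\int (h\cdot\nabla\Lambda^r h)\cdot\Lambda^r u$ does not pair correctly with $\int(h\cdot\nabla\Lambda^r u)\cdot\Lambda^r h$ and the estimate fails, so this structural identity must be verified carefully in the commutator decomposition.
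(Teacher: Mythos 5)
Your proposal is sound and follows essentially the standard route: the paper itself gives no proof of this theorem but defers entirely to Caflisch--Klapper--Steele \cite{CKS}, whose argument is precisely the symmetric-hyperbolic energy estimate with the magnetic cross-cancellation plus the Beale--Kato--Majda logarithmic Sobolev inequality that you describe. The only cosmetic imprecision is that Gronwall on $Y(t)=\log(e+\norm{u}_{H^r}^2+\norm{h}_{H^r}^2)$ yields $Y(t)\leq Y(0)\exp(C(T+M))$ rather than $Y(0)\exp(CM)$, which is immaterial on a finite time interval.
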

The proof of Theorem \ref{Theorem 3.2} can be found in \cite{CKS}, which is analogue of the Beal-Kato-Majda Theorem on the Euler equations. With this Theorem and the Biot-Savart law, one can easily deduce the existence of solution $(\omega, J)\in C([0,T);H^r(\mathbb{T}^3))$ to equations \eqref{1.2} if the initial data $(\omega_0,J_0)=(\curl u_0,\curl h_0)\in H^r(\mathbb{T}^3)$.

In the following we state some Lemmas concerning the estimates of the nonlinear terms in equation. First, we recall two useful Lemmas from \cite{KV1}.
\begin{lemma}[Lemma 3.1 of \cite{KV1}]\label{lemma 3.2}
Let $w\in X_{r,\tau,s}$, for $\tau\geq0$ and $r\geq1$. Then for $m=1,2,3$ we have
\begin{equation*}
\|\Lambda_m^r w\|_{L^2}\leq \|\Lambda\Lambda_m^{r-1}w\|_{L^2}\leq C\|w\|_{H^r}
\end{equation*}
and
\begin{equation*}
\|\nabla H_m\Lambda_m^{r-1}e^{\tau\Lambda_m^{1/s}}w\|_{L^2}\leq \|\Lambda \Lambda_m^{r-1}e^{\tau\Lambda_m^{1/s}}w\|_{L^2}\leq C\|w\|_{X_{r,\tau,s}},
\end{equation*}
where $C$ is a positive constant.
\end{lemma}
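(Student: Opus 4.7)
My plan is to prove both chains of inequalities by passing to the Fourier side and comparing symbols pointwise in $k \in \mathbb{Z}^3$, then re-summing via Parseval.

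\textbf{First chain.} Using Parseval, $\|\Lambda_m^r w\|_{L^2}^2 = (2\pi)^3 \sum_k |k_m|^{2r} |\hat{w}_k|^2$ and $\|\Lambda \Lambda_m^{r-1} w\|_{L^2}^2 = (2\pi)^3 \sum_k |k|_1^2 |k_m|^{2(r-1)} |\hat{w}_k|^2$. Since $|k_m| \leq |k|_1$, the inequality $|k_m|^{r} \leq |k|_1 |k_m|^{r-1}$ is pointwise (this is where $r \geq 1$ is used, to ensure $|k_m|^{r-1}$ makes sense for $k_m = 0$). For the second step I will use Cauchy--Schwarz $|k|_1 \leq \sqrt{3}|k|$ together with $|k_m| \leq |k|$, so $|k|_1 |k_m|^{r-1} \leq \sqrt{3}\,|k|^r \leq C(1+|k|^2)^{r/2}$, which yields the $H^r$ bound.

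\textbf{Second chain.} The operator $\nabla H_m$ has Fourier symbol $i k\,\sgn(k_m)$ (vector valued), whose modulus is $|k| \leq |k|_1$. Hence
\[
\|\nabla H_m \Lambda_m^{r-1} e^{\tau \Lambda_m^{1/s}} w\|_{L^2}^2 = (2\pi)^3 \sum_{k}|k|^2 |k_m|^{2(r-1)} e^{2\tau|k_m|^{1/s}} |\hat{w}_k|^2
\]
is immediately dominated by the corresponding sum with $|k|^2$ replaced by $|k|_1^2$, proving the first half.

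\textbf{The key step} is the final inequality $\|\Lambda \Lambda_m^{r-1} e^{\tau \Lambda_m^{1/s}} w\|_{L^2} \leq C\|w\|_{X_{r,\tau,s}}$, since the exponential weight $e^{\tau |k_m|^{1/s}}$ on the left must be absorbed by exponentials $e^{\tau |k_j|^{1/s}}$ appearing in the three summands of $\|w\|_{X_{r,\tau,s}}^2$, and $|k_m|$ need not be the largest component. The trick I would use is to pick $j^\ast = j^\ast(k) \in \{1,2,3\}$ with $|k_{j^\ast}| = \max_j |k_j|$. Then simultaneously
\[
|k|_1 \leq 3|k_{j^\ast}|,\qquad |k_m|^{r-1} \leq |k_{j^\ast}|^{r-1},\qquad e^{\tau|k_m|^{1/s}} \leq e^{\tau|k_{j^\ast}|^{1/s}},
\]
so that
\[
|k|_1\,|k_m|^{r-1} e^{\tau|k_m|^{1/s}} \leq 3|k_{j^\ast}|^{r} e^{\tau|k_{j^\ast}|^{1/s}} \leq 3\sum_{j=1}^3 |k_j|^r e^{\tau|k_j|^{1/s}}.
\]
Squaring and applying $(a_1+a_2+a_3)^2 \leq 3(a_1^2+a_2^2+a_3^2)$ gives a pointwise bound by $27\sum_j |k_j|^{2r} e^{2\tau|k_j|^{1/s}}$. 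Multiplying by $|\hat{w}_k|^2$, summing in $k$, and recognizing the three pieces of $\|w\|_{X_{r,\tau,s}}^2$ concludes the proof with $C = 3\sqrt{3}$. No part of the argument is genuinely hard; the only subtlety is the max-index device that handles the exponential weight uniformly in $m$.
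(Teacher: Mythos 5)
Your proof is correct, and in fact the paper itself gives no argument here (the lemma is recalled verbatim from Lemma 3.1 of \cite{KV1}), so your write-up simply supplies the standard omitted proof: Plancherel plus the pointwise symbol comparisons $|k_m|\le |k|\le |k|_1\le \sqrt{3}|k|$ and, for the weighted bound, the max-component device $|k|_1\,|k_m|^{r-1}e^{\tau|k_m|^{1/s}}\le 3|k_{j^\ast}|^{r}e^{\tau|k_{j^\ast}|^{1/s}}$, which is exactly how Kukavica--Vicol handle the exponential weight. No gaps; the constant $3\sqrt{3}$ (or even $3$, if you square before summing over $j$) is fine.
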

And we recall the Biot-Savart law in \cite{MB}.
\begin{lemma}\label{Lemma 3.3}
Let $w\in X_{r,\tau,s}$, for $\tau\geq0$ and $r\geq1$. Let $v=\mathcal{K}*w$. Then for $m=1,2,3$ we have
  \begin{equation*}
    \norm{\Lambda_m^{r+1} v}_{L^2}\leq \norm{\Lambda\Lambda_m^r v}_{L^2}\leq C\norm{w}_{H^r}
  \end{equation*}
and
  \begin{equation*}
    \norm{\Lambda_m^{r+1}e^{\tau\Lambda_m^{1/s}} v}_{L^2}\leq \norm{\Lambda\Lambda_m^r e^{\tau\Lambda_m^{1/s}} v}_{L^2}\leq C\norm{w}_{X_{r,\tau,s}},
  \end{equation*}  
where $C$ is a positive constant independent of $v,w$.
\end{lemma}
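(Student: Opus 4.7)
The plan is to reduce both lines of inequalities to elementary Parseval estimates on the Fourier side. Since $v = \mathcal{K}\ast w$ inverts the curl on mean-zero divergence-free fields, we have $\hat v_k = (ik\times\hat w_k)/|k|^2$ for $k\neq 0$, yielding the pointwise bound
\begin{equation*}
|\hat v_k| \leq \frac{|\hat w_k|}{|k|}, \qquad k\in\mathbb{Z}^3\setminus\{0\}.
\end{equation*}
This is the only analytic input I will use from Biot-Savart; everything else reduces to Parseval and the trivial coordinate comparisons $|k_m|\leq |k|_1\leq \sqrt{3}\,|k|$.

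For the first inequality of each line, namely $\norm{\Lambda_m^{r+1}v}_{L^2}\leq\norm{\Lambda\Lambda_m^r v}_{L^2}$ and its Gevrey analogue, I would simply compare the two sides frequency-by-frequency. Expanding each norm by Parseval gives summands $(2\pi)^3|k_m|^{2(r+1)}|\hat v_k|^2$ on the left and $(2\pi)^3|k|_1^2|k_m|^{2r}|\hat v_k|^2$ on the right, and the estimate follows immediately from $|k_m|^2\leq|k|_1^2$. Multiplying every summand by the Gevrey weight $e^{2\tau|k_m|^{1/s}}$ does not affect this comparison.

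For the second inequality in the Sobolev line I would chain the Biot-Savart bound with $|k|_1\leq\sqrt 3\,|k|$ and $|k_m|\leq|k|$ term-by-term:
\begin{equation*}
\norm{\Lambda\Lambda_m^r v}_{L^2}^2 \leq (2\pi)^3\sum_{k\neq 0}\frac{|k|_1^2|k_m|^{2r}}{|k|^2}|\hat w_k|^2 \leq C\,(2\pi)^3\sum_{k\neq 0}|k|^{2r}|\hat w_k|^2 \leq C\norm{w}_{H^r}^2.
\end{equation*}
The Gevrey analogue follows from the same chain with the weight $e^{2\tau|k_m|^{1/s}}$ carried through each step, giving
\begin{equation*}
\norm{\Lambda\Lambda_m^r e^{\tau\Lambda_m^{1/s}} v}_{L^2}^2 \leq C\norm{\Lambda_m^r e^{\tau\Lambda_m^{1/s}} w}_{L^2}^2 \leq C\norm{w}_{X_{r,\tau,s}}^2,
\end{equation*}
the last step just dropping the other two summands in the definition of $\norm{w}_{X_{r,\tau,s}}^2$. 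There is no real obstacle; the lemma is essentially bookkeeping once the Fourier bound $|\hat v_k|\leq|\hat w_k|/|k|$ is in hand, and terms with $k_m=0$ drop out of every $\Lambda_m$-sum by definition, so dividing by $|k|$ causes no difficulty.
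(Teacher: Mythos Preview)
Your argument is correct. The paper itself omits the proof entirely, remarking only that it is ``standard by Calder\'on Zygmund theory''; your direct Parseval computation, using the explicit Fourier representation $\hat v_k=(ik\times\hat w_k)/|k|^2$ of the periodic Biot--Savart law, is in fact the cleanest way to handle the torus case and makes the constants transparent without appealing to singular-integral machinery.
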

The proof is standard by Calder\'on Zygmund theory, we thus omit the proof. In order to estimate the nonlinear terms in equations \eqref{1.2}, we first recall the Lemma 2.5 in \cite{KV1}, in which the authors proved the case of $s=1$. Denote the $L^2$-norm and and the inner product by $\norm{\,\cdot\,}_{L^2(\mathbb{T}^3)}$ and $\inner{\cdot,\cdot}_{L^2(\mathbb{T}^3)}$ respectively.
\begin{lemma}[Lemma 2.5 of \cite{KV1}]\label{Lemma 3.4}
Let $m=1,2,3$ and $\omega \in Y_{r,\tau,s}$, where $r>\frac{5}{2}+\frac{3}{2s}$. If $u=\mathcal{K}* \omega$, where $\mathcal{K}$ is the Biot-Savart kernel, then
  \begin{align}\label{3.1}
    &\abs{\inner{u\cdot\nabla \omega,\Lambda_m^{2r}e^{2\tau\Lambda_m^{1/s}}\omega}_{L^2(\mathbb{T}^3)}}+\abs{\inner{\omega\cdot\nabla u,\Lambda_m^{2r}e^{2\tau\Lambda_m^{1/s}}\omega}_{L^2(\mathbb{T}^3)}} \notag\\
    &\leq C\big(\tau\norm{\nabla u}_{L^\infty}+\tau^2\norm{\omega}_{H^r}+\tau^2\norm{\omega}_{X_{r,\tau,s}}\big)\norm{\omega}_{Y_{r,\tau,s}}^2 \notag\\
    &\quad+C\big(\norm{\nabla u}_{L^\infty}\norm{\omega}_{X_{r,\tau,s}}+(1+\tau)\norm{\omega}_{H^r}^2\big)\norm{\omega}_{X_{r,\tau,s}},
  \end{align}
 where the positive constant $C$ depends on $r$ and $s$.
\end{lemma}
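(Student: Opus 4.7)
The plan is to rewrite both pairings in Fourier space and exploit the commutator structure induced by $\nabla\cdot u=0$. Denote $M_k=|k_m|^r e^{\tau|k_m|^{1/s}}$, so that $\|\Lambda_m^r e^{\tau\Lambda_m^{1/s}}\omega\|_{L^2}^2=(2\pi)^3\sum_k M_k^2|\hat\omega_k|^2$, while the $Y_{r,\tau,s}$-norm corresponds to the multiplier $|k_m|^{r+1/(2s)}e^{\tau|k_m|^{1/s}}$. Since $u$ is divergence-free, $\int u\cdot\nabla W\cdot W\,dx=0$ for any $W$; applying this identity to $W=\Lambda_m^r e^{\tau\Lambda_m^{1/s}}\omega$ converts
\[
\langle u\cdot\nabla\omega,\Lambda_m^{2r}e^{2\tau\Lambda_m^{1/s}}\omega\rangle_{L^2}=\sum_{k=j+l}(M_k-M_l)\,i(\hat u_j\cdot l)\,\hat\omega_l\cdot\overline{M_k\hat\omega_k}.
\]

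The key pointwise ingredient is a multiplier-difference estimate, which I split as
\[
M_k-M_l=(|k_m|^r-|l_m|^r)\,e^{\tau|l_m|^{1/s}}+|k_m|^r\bigl(e^{\tau|k_m|^{1/s}}-e^{\tau|l_m|^{1/s}}\bigr).
\]
For the polynomial part I use $||k_m|^r-|l_m|^r|\leq C_r |j_m|(|k_m|^{r-1}+|l_m|^{r-1})$. For the exponential part, the assumption $s\geq 1$ makes $x\mapsto x^{1/s}$ subadditive, so $|k_m|^{1/s}-|l_m|^{1/s}\leq|j_m|^{1/s}$ and the mean value theorem gives
\[
|e^{\tau|k_m|^{1/s}}-e^{\tau|l_m|^{1/s}}|\leq \tau|j_m|^{1/s}e^{\tau|j_m|^{1/s}}e^{\tau|l_m|^{1/s}}.
\]
A Taylor expansion $e^{\tau|j_m|^{1/s}}=1+\tau|j_m|^{1/s}+O(\tau^2|j_m|^{2/s}e^{\tau|j_m|^{1/s}})$ separates this into a $\tau$-linear piece, which (after absorbing the gain $|j_m|^{1/s}$ into a low-frequency variable) generates the $\tau\|\nabla u\|_{L^\infty}\|\omega\|_{Y_{r,\tau,s}}^2$ term, and a $\tau^2$ remainder whose extra factor $|j_m|^{2/s}e^{\tau|j_m|^{1/s}}$ is controlled either by $\|\omega\|_{H^r}$ or by $\|\omega\|_{X_{r,\tau,s}}$ depending on whether the exponential weight is absorbed.

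To exploit these bounds I split the triple sum according to whether $|j|$ is much smaller or much larger than $\max(|k|,|l|)$. In the low-$|j|$ regime I bound $\sum_j|j|\,|\hat u_j|$ by $\|\nabla u\|_{L^\infty}$ times an $\ell^1$-controlled sum (via Sobolev embedding), while $|l|\cdot M_l$ combines with the available $1/(2s)$ half-derivative to match the $Y_{r,\tau,s}$-norm. In the high-$|j|$ regime I symmetrize using $|k_m|^r\leq C(|j_m|^r+|l_m|^r)$ and the embedding $H^r\hookrightarrow W^{1,\infty}$ (valid since $r>5/2+3/(2s)$) to extract $\|\omega\|_{H^r}$, invoking Lemma \ref{Lemma 3.3} whenever $u$ carries the high-frequency index, converting $\|\Lambda_m^{r+1}e^{\tau\Lambda_m^{1/s}}u\|_{L^2}$ into $\|\omega\|_{X_{r,\tau,s}}$. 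For the companion pairing $\langle\omega\cdot\nabla u,\Lambda_m^{2r}e^{2\tau\Lambda_m^{1/s}}\omega\rangle$ the divergence-free cancellation is unavailable, so I instead apply the submultiplicative bound $M_k\leq C(M_j\,e^{\tau|l_m|^{1/s}}+e^{\tau|j_m|^{1/s}}M_l)$ (which again follows from concavity), perform the same high-low splitting, and use Lemma \ref{Lemma 3.3} to absorb the derivative $|l|$ attached to $\hat u_l$.

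The main technical hurdle is bookkeeping the distribution of the commutator gain $|j_m|^{1/s}$ across the convolution variables: routing it onto a low-frequency factor produces the $\|\nabla u\|_{L^\infty}$ or $\|\omega\|_{H^r}$ contributions, whereas routing it onto a high-frequency factor must be exactly balanced by the extra $1/(2s)$ half-derivative built into $Y_{r,\tau,s}$, producing the two $\tau^2$ terms. Keeping the linear-in-$\tau$ term cleanly separated from the exponential remainder (so that only the remainder pays the $\tau^2$ weight) is the step where the generalization from $s=1$ in \cite{KV1} to arbitrary $s\geq 1$ requires the genuinely new, though mechanical, use of the concavity of $x^{1/s}$.
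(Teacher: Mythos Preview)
Your outline has a genuine gap at the step where $\|\nabla u\|_{L^\infty}$ is supposed to appear. You write that in the low-$|j|$ regime you ``bound $\sum_j|j|\,|\hat u_j|$ by $\|\nabla u\|_{L^\infty}$ times an $\ell^1$-controlled sum (via Sobolev embedding).'' But the $\ell^1$-norm of the Fourier coefficients of $\nabla u$ controls $\|\nabla u\|_{L^\infty}$ from \emph{above}, not below; Sobolev embedding in this direction only yields control by $\|u\|_{H^{r'}}$ for $r'>5/2$, hence by $\|\omega\|_{H^r}$. That would produce an estimate with $\|\omega\|_{H^r}$ in place of $\|\nabla u\|_{L^\infty}$, which is precisely what the lemma (and the whole point of \cite{KV1}) avoids.

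The mechanism actually used is different and more delicate. One does \emph{not} merely bound $\big||\ell_m|^r-|k_m|^r\big|$ by $C|j_m|(|\ell_m|^{r-1}+|k_m|^{r-1})$; instead one expands by the mean value theorem keeping the leading term $r(|\ell_m|-|k_m|)|k_m|^{r-1}$ exact, and then invokes the algebraic identity
\[
|\ell_m|-|k_m|=j_m\,\sgn(k_m)+2(j_m+k_m)\sgn(j_m)\,\chi_{\{\sgn(k_m+j_m)\sgn(k_m)=-1\}}.
\]
On the support of the characteristic function one has $|k_m|\le|j_m|$, so that piece is harmless. The principal piece carries the exact symbol $j_m\sgn(k_m)$, which by Plancherel is recognized as the physical-space pairing
\[
\big(\partial_m u\cdot\nabla H_m\Lambda_m^{r-1}e^{\tau\Lambda_m^{1/s}}\omega,\ \Lambda_m^r e^{\tau\Lambda_m^{1/s}}\omega\big)_{L^2(\mathbb{T}^3)},
\]
and H\"older's inequality together with Lemma~\ref{lemma 3.2} then gives the factor $\|\nabla u\|_{L^\infty}\|\omega\|_{X_{r,\tau,s}}^2$. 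The same sign decomposition is needed again for the $\tau$-linear pieces arising from the exponential commutator (applied to $|\ell_m|^{r\pm 1/(2s)}-|k_m|^{r\pm 1/(2s)}$) in order to produce $\tau\|\nabla u\|_{L^\infty}\|\omega\|_{Y_{r,\tau,s}}^2$. Your crude mean-value bound destroys this structure, and without it you cannot reach the stated right-hand side of \eqref{3.1}.
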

We remark that for $s>1$ there are some minor changes in the proof which cause the condition $r>\frac{5}{2}+\frac{3}{2s}$, and we show the details in the proof of the following Lemma.
First we introduce the following notation
\begin{equation*}
  \Psi=(\omega,J),
\end{equation*}
and the corresponding norm
$$
 \norm{\Psi}_{H^r}^2=\norm{\omega}_{H^r}^2+\norm{J}_{H^r}^2,\ \norm{\Psi}_{X_{r,\tau,s}}^2=\norm{\omega}_{X_{r,\tau,s}}^2+\norm{J}_{X_{r,\tau,s}}^2.
$$
With very similar method as Lemma 2.5 of \cite{KV1}, we can obtain the following Lemma.
\begin{lemma}\label{lemma 3.5}
Let $m=1,2,3$ and $\omega, J \in Y_{r,\tau,s}$, where $r>\frac{5}{2}+\frac{3}{2s}$. If $u=\mathcal{K}* \omega, h=\mathcal{K}* J$, where $\mathcal{K}$ is the Biot-Savart kernel, then
 \begin{align}\label{3.2}
    &\abs{\inner{u\cdot\nabla J,\Lambda_m^{2r}e^{2\tau\Lambda_m^{1/s}}J}_{L^2(\mathbb{T}^3)}}+\abs{\inner{J\cdot\nabla u,\Lambda_m^{2r}e^{2\tau\Lambda_m^{1/s}}J}_{L^2(\mathbb{T}^3)}} \notag\\
    &\leq C(\tau\norm{\nabla u}_{L^\infty}+\tau^2\norm{\Psi}_{{H}^r}+\tau^2\norm{\Psi}_{X_{r,\tau,s}})\norm{\Psi}_{Y_{r,\tau,s}}\norm{J}_{Y_{r,\tau,s}} \notag\\
    &\quad+C\big[\norm{\nabla u}_{L^\infty}\norm{J}_{X_{r,\tau,s}}+\norm{\nabla h}_{L^\infty}\norm{\omega}_{X_{r,\tau,s}}+(1+\tau)\norm{\Psi}_{{H}^r}^2\big] \norm{J}_{X_{r,\tau,s}},
  \end{align}
where $C$ is a positive constant.  
\end{lemma}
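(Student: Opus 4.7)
My plan is to follow the Fourier-space symmetrization technique used in \cite{KV1} (recalled as Lemma \ref{Lemma 3.4}), now adapted to the MHD cross-structure where the magnetic component $J$ and the velocity-like component $u$ must be estimated via the separate Biot-Savart relations. Writing $\phi_m(k)=|k_m|^r e^{\tau|k_m|^{1/s}}$ and expanding each inner product in Fourier coefficients, each bilinear form becomes a trilinear sum over frequency triples $j+p+q=0$ weighted by $\phi_m(q)^2$.

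For the first term $\inner{u\cdot\nabla J,\Lambda_m^{2r}e^{2\tau\Lambda_m^{1/s}}J}_{L^2}$ the two factors $\hat J_p,\hat J_q$ appearing in the expansion are exchangeable, so I would symmetrize in $(p,q)$ and invoke $\nabla\cdot u=0$ (i.e.\ $\hat u_j\cdot j=0$, equivalently $\hat u_j\cdot ip+\hat u_j\cdot iq=0$ on the constraint) to reduce the expression to the commutator form
\begin{equation*}
2\,\inner{u\cdot\nabla J,\Lambda_m^{2r}e^{2\tau\Lambda_m^{1/s}}J}_{L^2}= (2\pi)^3\!\!\!\sum_{j+p+q=0}\!\!\!(\hat u_j\cdot ip)\bigl(\phi_m(q)^2-\phi_m(p)^2\bigr)\hat J_p\cdot\hat J_q.
\end{equation*}
The weight difference $\phi_m(q)^2-\phi_m(p)^2$ is then decomposed, exactly as in \cite{KV1}, via the mean value theorem into a polynomial piece of order $2r-1$, an exponential-commutator piece carrying a factor $\tau|j_m|^{1/s}$, and a quadratic-in-$\tau$ remainder. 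Each resulting trilinear form is handled by Cauchy--Schwarz and Plancherel, with $\hat u_j$ placed in the $L^\infty$/low-frequency slot via Sobolev embedding and Lemma \ref{Lemma 3.3} converting $\hat u$ into $\hat\omega$. This produces the contributions involving $\norm{\nabla u}_{L^\infty}$, $\norm{\omega}_{H^r}$, and $\norm{\omega}_{X_{r,\tau,s}}$ on the right of \eqref{3.2}.

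The second term $\inner{J\cdot\nabla u,\Lambda_m^{2r}e^{2\tau\Lambda_m^{1/s}}J}_{L^2}$ is treated analogously, but here two of the three Fourier factors are $\hat J$ while one is $\hat u$, so the symmetrization has a different flavor. One may either exchange the two $\hat J$'s and use $\nabla\cdot J=0$ to generate a commutator, or integrate by parts transferring the derivative onto the weighted $J$ (using $\nabla\cdot J=0$ to kill the zero-order remainder). In either case the weight $\phi_m(q)^2$ may land on the $\hat u$-factor, in which case Lemma \ref{Lemma 3.3} converts it to a norm of $\omega$, while the $L^\infty$ control is carried by $\nabla h$ (bounded via Sobolev embedding from $\norm{J}_{H^r}$). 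This is exactly the origin of the mixed cross term $\norm{\nabla h}_{L^\infty}\norm{\omega}_{X_{r,\tau,s}}\norm{J}_{X_{r,\tau,s}}$ in \eqref{3.2}.

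The principal technical obstacle, as hinted by the authors' remark after Lemma \ref{Lemma 3.4}, is the exponential commutator for general $s\ge1$. The subadditivity $|q_m|^{1/s}\le|p_m|^{1/s}+|j_m|^{1/s}$ (valid since $x\mapsto x^{1/s}$ is concave) combined with the elementary inequality $e^{a+b}-e^a\le b\,e^{a+b}$ produces a factor of $\tau|j_m|^{1/s}$, but absorbing this factor into a genuine Sobolev norm on the low-frequency slot costs an additional $3/(2s)$ derivatives beyond the $5/2$ needed just to embed into $L^\infty$; this is precisely what yields the condition $r>5/2+3/(2s)$ (and recovers $r>7/2$ when $s=1$, matching the remark after Theorem \ref{Theorem 2.1}). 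Once this exponent bookkeeping is carried out alongside the Biot-Savart-based allocation of $\omega$ versus $J$ in each slot, the four structural terms on the right of \eqref{3.2} arise exactly as stated.
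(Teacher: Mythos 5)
Your treatment of the first term is essentially the paper's: the symmetrized identity you write (exchange the two $\hat J$ factors and use $\hat u_j\cdot j=0$) is equivalent to subtracting the vanishing term $\inner{u\cdot\nabla \Lambda_m^r e^{\tau\Lambda_m^{1/s}}J,\Lambda_m^r e^{\tau\Lambda_m^{1/s}}J}_{L^2}$, and the subsequent mean-value-theorem splitting of the polynomial weight, the exponential-commutator estimates via \eqref{3.8}, and the Plancherel step that returns to physical space to produce $\norm{\nabla u}_{L^\infty}$ (note: this factor comes from going back to physical space as in \eqref{3.6}, not from a Sobolev embedding) all match \eqref{3.3}--\eqref{3.13}.

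The genuine gap is in the stretching term $\inner{J\cdot\nabla u,\Lambda_m^{2r}e^{2\tau\Lambda_m^{1/s}}J}_{L^2}$. Your primary mechanism --- ``exchange the two $\hat J$'s and use $\nabla\cdot J=0$ to generate a commutator'' --- does not work: in the trilinear sum $\sum_{j+k+\ell=0}(\hat J_j\cdot k)(\hat u_k\cdot\hat J_\ell)\,|\ell_m|^{2r}e^{2\tau|\ell_m|^{1/s}}$ the two $\hat J$ factors enter asymmetrically (one is contracted with the frequency $k$, the other with the coefficient $\hat u_k$), so swapping them changes the algebraic structure, and $\divg J=0$ only yields $\hat J_j\cdot j=0$, which produces no cancellation here. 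There is in fact no commutator cancellation available for this term; the paper instead subtracts the two leading expressions $\inner{\Lambda_m^r e^{\tau\Lambda_m^{1/s}}J\cdot\nabla u,\Lambda_m^r e^{\tau\Lambda_m^{1/s}}J}$ and $\inner{J\cdot\nabla\Lambda_m^r e^{\tau\Lambda_m^{1/s}}u,\Lambda_m^r e^{\tau\Lambda_m^{1/s}}J}$, bounds them directly in \eqref{3.14} (using $\norm{\nabla u}_{L^\infty}$, $\norm{J}_{L^\infty}\leq\norm{\nabla h}_{L^\infty}$ and the Biot--Savart Lemma \ref{Lemma 3.3}, which is exactly where the cross term $\norm{\nabla h}_{L^\infty}\norm{\omega}_{X_{r,\tau,s}}\norm{J}_{X_{r,\tau,s}}$ originates), and then estimates the remaining difference through the explicit three-term decomposition \eqref{3.15}--\eqref{3.19}. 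Your fallback (``integrate by parts transferring the derivative onto the weighted $J$'') is not developed: as stated it does not show how the $(1+\tau)\norm{\Psi}_{H^r}^2$ and the $\tau$, $\tau^2$ contributions with $Y_{r,\tau,s}$ norms arise, so the stated right-hand side of \eqref{3.2} is not reached. A minor additional point: your accounting of the exponent condition is inconsistent --- $5/2+3/(2s)$ equals $4$ at $s=1$, not $7/2$; the remark after Theorem \ref{Theorem 2.1} that $r>7/2$ suffices for $s=1$ relies on the different decomposition used in \cite{KV1} for the analytic case, as the paper itself notes after \eqref{3.7}.
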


\begin{proof}
Let $m\in\{1,2,3\}$. In order to estimate $\abs{\inner{u\cdot\nabla J,\Lambda_m^{2r}e^{2\tau\Lambda_m^{1/s}}J}_{L^2(\mathbb{T}^3)}}$, we appeal to the cancellation property $\inner{u\cdot\nabla \Lambda_m^r e^{\tau \Lambda_m^{1/s}}J,\Lambda_m^r e^{\tau \Lambda_m^{1/s}}J}_{L^2(\mathbb{T}^3)}=0$ with notification $\divg u=0$. Using Plancherel's theorem we obtain
\begin{align}\label{3.3}
   &\inner{u\cdot\nabla J,\Lambda_m^{2r}e^{2\tau\Lambda_m^{1/s}}J}_{L^2(\mathbb{T}^3)} \notag\\
   &=\inner{u\cdot\nabla J,\Lambda_m^{2r}e^{2\tau\Lambda_m^{1/s}}J}_{L^2(\mathbb{T}^3)}
   -\inner{u\cdot\nabla \Lambda_m^r e^{\tau \Lambda_m^{1/s}}J,\Lambda_m^r e^{\tau \Lambda_m^{1/s}}J}_{L^2(\mathbb{T}^3)} \notag \\
   &= i(2\pi)^3 \sum_{j+k+\ell=0}(|\ell_m|^r e^{\tau |\ell_m|^{1/s}}-|k_m|^r e^{\tau |k_m|^{1/s}})(\hat{u}_j\cdot k)(\hat{J}_k\cdot\hat{J}_\ell)|\ell_m|^r e^{\tau |\ell_m|^{1/s}} \notag \\
   &=  i(2\pi)^3 \sum_{j+k+\ell=0}(|\ell_m|^r -|k_m|^r) e^{\tau |k_m|^{1/s}}(\hat{u}_j\cdot k)(\hat{J}_k\cdot\hat{J}_\ell)|\ell_m|^r e^{\tau |\ell_m|^{1/s}}  \notag\\
   &\quad+ i(2\pi)^3 \sum_{j+k+\ell=0}|\ell_m|^r( e^{\tau |\ell_m|^{1/s}}- e^{\tau |k_m|^{1/s}})(\hat{u}_j\cdot k)(\hat{J}_k\cdot\hat{J}_\ell)|\ell_m|^r e^{\tau |\ell_m|^{1/s}} \notag\\
   &:=T_{u,J,J}^{(1)}+T_{u,J,J}^{(2)},
\end{align}
with $j,k,\ell\in\mathbb{Z}^3$. Recall that $\hat{u}_0=\hat{J}_0=0$. In order to estimate ${T}^{(1)}_{u,J,J}$, we first expand $\abs{\ell_m}^r-\abs{k_m}^r$ by means of mean value theorem,
\begin{align}\label{3.4}
  \abs{\ell_m}^r-\abs{k_m}^r &=r(\abs{\ell_m}-\abs{k_m})(\theta_{m,k,\ell}\abs{\ell_m}+(1-\theta_{m,k,\ell})\abs{k_m})^{r-1} \notag\\
  &=r(\abs{\ell_m}-\abs{k_m})\big[\big(\theta_{m,k,\ell}\abs{\ell_m}+
  (1-\theta_{m,k,\ell})\abs{k_m}\big)^{r-1}-\abs{k_m}^{r-1}\big] \notag\\
  &\quad+r(\abs{\ell_m}-\abs{k_m})\abs{k_m}^{r-1},
\end{align}
where $\theta_{m,k,\ell}\in(0,1)$ is a constant. Since $j+k+\ell=0$, we have, by the triangle inequality,
\begin{equation*}
\begin{aligned}
  &\quad\abs{r(\abs{\ell_m}-\abs{k_m})\big[(\theta_{m,k,\ell}\abs{\ell_m}+(1-\theta_{m,k,\ell})\abs{k_m})^{r-1}-\abs{k_m}^{r-1}\big]}\\
  &\leq C\abs{j_m}^2(\abs{j_m}^{r-2}+\abs{k_m}^{r-2}).
\end{aligned}
\end{equation*}
Since $j_m+k_m+\ell_m=0$, we have the following decomposition, introduced by \cite{KV1},
\begin{align}\label{3.5}
  \abs{\ell_m}-\abs{k_m} &=\abs{j_m+k_m}-\abs{k_m}\notag\\
   &=j_m\sgn(k_m)+2(j_m+k_m)\sgn(j_m)\chi_{\{\sgn(k_m+j_m)\sgn(k_m)=-1\}}.
\end{align}
In the region $\{\sgn(k_m+j_m)\sgn(k_m)=-1\}$, we have $\abs{k_m}\leq\abs{j_m}$. Then with use of $e^\xi\leq e+\xi^2 e^{\xi}$ for $\xi=\tau|k_m|^{1/s}\geq0$, $|\hat{u}_j\cdot k|\leq C|\hat{u}_j||k|_1$ and Plancherel's theorem we have, by discrete Cauchy-Schwartz inequality,
\begin{align}\label{3.6}
  &\abs{{T}^{(1)}_{u,J,J}} \notag\\
  &\leq  C\sum_{j+k+\ell=0}\bigg\{(\abs{j_m}^{r}+\abs{j_m}^2\abs{k_m}^{r-2})(e+\tau^2\abs{k_m}^{2/s}e^{\tau\abs{k_m}^{1/s}})
  |\hat{u}_j|\abs{k}_1|\hat{J}_k||\hat{J}_\ell| \notag\\
  &\quad\quad\quad\quad\quad\quad\quad\quad\times\abs{\ell_m}^r e^{\tau\abs{\ell_m}^{1/s}}\bigg\} \notag\\
  &\quad+C\bigg|\sum_{j+k+\ell=0}j_m\sgn(k_m)\abs{k_m}^{r-1}e^{\tau\abs{k_m}^{1/s}}(\hat{u}_j\cdot k)(\hat{J}_k\cdot\hat{J}_\ell)\abs{\ell_m}^r e^{\tau\abs{\ell_m}^{1/s}}\bigg| \notag\\
  &\leq C\norm{\nabla u}_{L^\infty}\norm{J}_{X_{r,\tau,s}}\norm{\Lambda_m^r e^{\tau\Lambda_m^{1/s}}J}_{L^2}+C\norm{\omega}_{{H}^r}\norm{J}_{{H}^r}\norm{\Lambda_m^r e^{\tau\Lambda_m^{1/s}}J}_{L^2} \notag\\
  &\quad+C\tau^2\norm{\omega}_{{H}^r}\norm{J}_{Y_{r,\tau,s}}\norm{\Lambda_m^{r+\frac{1}{2s}} e^{\tau\Lambda_m^{1/s}}J}_{L^2},
\end{align}
where $C$ is some constant depending on $r$. The presence of the supremum of the velocity gradient, the innovative point of \cite{KV1}, is due to the use of Plancherel's theorem in the following form,
\begin{align*}
  &\quad\bigg|\sum_{j+k+\ell=0}j_m\sgn(k_m)\abs{k_m}^{r-1}e^{\tau\abs{k_m}^{1/s}}(\hat{h}_j\cdot k)(\hat{\omega}_k\cdot\hat{J}_\ell)\abs{\ell_m}^r e^{\tau\abs{\ell_m}^{1/s}}\bigg| \\
  &=\bigg|\inner{\partial_m h\cdot\nabla H_m\Lambda_m^{r-1}e^{\tau\Lambda_m^{1/s}}\omega,\Lambda_m^r e^{\tau \Lambda_m^{1/s}}J}_{L^2(\mathbb{T}^3)}\bigg| \\
  &\leq \norm{\nabla h}_{L^\infty} \norm{\omega}_{X_{r,\tau,s}}\|\Lambda_m^r e^{\tau\Lambda_m^{1/s}}J\|_{L^2}.
\end{align*}
In order to estimate ${T}_{u,J,J}^{(2)}$, a little different from Lemma 2.5 of \cite{KV1}, we rewrite it into the sum of the following three terms,
\begin{align}\label{3.7}
  &{T}^{(2)}_{u,J,J}\notag\\
  &=i(2\pi)^3\sum_{j+k+\ell=0}\bigg[(\hat{u}_j\cdot k)\abs{\ell_m}^{r-\frac{1}{2s}}\bigg(e^{\tau(\abs{\ell_m}^{1/s}-\abs{k_m}^{1/s})}-1\notag\\
  &\quad\quad\quad\quad\quad\quad\quad-\tau(\abs{\ell_m}^{1/s}-\abs{k_m}^{1/s})\bigg) e^{\tau\abs{k_m}^{1/s}}(\hat{J}_k\cdot\hat{J}_\ell)\abs{\ell_m}^{r+\frac{1}{2s}} e^{\tau\abs{\ell_m}^{1/s}}\bigg] \notag\\
  &\quad+i(2\pi)^3\sum_{j+k+\ell=0}\bigg[\tau(\abs{\ell_m}^{r+\frac{1}{2s}}-\abs{k_m}^{r+\frac{1}{2s}})e^{\tau\abs{k_m}^{1/s}}(\hat{u}_j\cdot k)(\hat{J}_k\cdot\hat{J}_\ell) \notag\\
  &\quad\quad\quad\quad\quad\quad\times\abs{\ell_m}^{r+\frac{1}{2s}}e^{\tau\abs{\ell_m}^{1/s}}\bigg] \notag\\
  &\quad-i(2\pi)^3\sum_{j+k+\ell=0}\bigg[\tau\abs{k_m}^{1/s}(\abs{\ell_m}^{r-\frac{1}{2s}}-\abs{k_m}^{r-\frac{1}{2s}})e^{\tau\abs{k_m}^{1/s}}(\hat{u}_j\cdot k)\notag\\
  &\quad\quad\quad\quad\quad\quad\times(\hat{J}_k\cdot\hat{J}_\ell)\abs{\ell_m}^{r+\frac{1}{2s}}e^{\tau\abs{\ell_m}^{1/s}}\bigg]\notag\\
  &:={R}_{u,J,J}^{(1)}+{R}_{u,J,J}^{(2)}-{R}_{u,J,J}^{(3)}.
\end{align}
We remark that we may have a different form of the above expression if $s=1$, see \cite{KV1}, however the above identity is valid for all $s\geq1$.
For the first term ${R}_{u,J,J}^{(1)}$, we appeal to the inequality $\abs{e^\xi-1-\xi}\leq \xi^2 e^{\abs{\xi}}$, for $\xi=\tau(|\ell_m|^{1/s}-|k_m|^{1/s})\in\mathbb{R}$, the triangle inequality $\abs{\ell_m}^{r-\frac{1}{2s}}\leq C(\abs{j_m}^{r-\frac{1}{2s}}+\abs{k_m}^{r-\frac{1}{2s}})$ and
\begin{equation}\label{3.8}
 \abs{\abs{\ell_m}^{1/s}-\abs{k_m}^{1/s}}\leq \abs{j_m}^{1/s},\quad \abs{\abs{\ell_m}^{1/s}-\abs{k_m}^{1/s}}\leq C\frac{\abs{j_m}}{\abs{\ell_m}^{1-1/s}+\abs{k_m}^{1-1/s}},
\end{equation}
where we note that $\abs{\ell_m}^{1-1/s}+\abs{k_m}^{1-1/s}\neq 0$.
With use of the above inequalities, ${R}^{(1)}_{u,J,J}$ can be bounded by
\begin{align}\label{3.9}
  &\abs{{R}^{(1)}_{u,J,J}}\notag\\
  &\leq C\tau^2\sum_{j+k+\ell=0}\bigg[|\hat{u}_j|\abs{k}_1(\abs{j_m}^{r-\frac{1}{2s}}+\abs{k_m}^{r-\frac{1}{2s}})\abs{j_m}^{1/s}\frac{\abs{j_m}}{\abs{\ell_m}^{1-1/s}+\abs{k_m}^{1-1/s}} \notag\\
  &\quad\quad\quad\quad\quad\quad\quad\quad\times e^{\tau\abs{j_m}^{1/s}} e^{\tau\abs{k_m}^{1/s}}|\hat{J}_k|\abs{\ell_m}^{r+\frac{1}{2s}}e^{\tau\abs{\ell_m}^{1/s}}|\hat{J}_\ell|\bigg] \notag\\
  &\leq C\tau^2\sum_{j+k+\ell=0}\bigg[\big(\abs{j_m}^{r+\frac{1}{2s}+1}e^{\tau\abs{j_m}^{1/s}}|\hat{u}_j| \big)\big(\abs{k}_1 e^{\tau\abs{k_m}^{1/s}}|\hat{J}_k|\big)\notag\\
  &\quad\quad\quad\quad\quad\times(\abs{\ell_m}^{r+\frac{1}{2s}}e^{\tau\abs{\ell_m}^{1/s}}|\hat{J}_\ell|)\bigg] \notag\\
  &\quad+C\tau^2\sum_{j+k+\ell=0}\bigg[\big(\abs{j_m}^{1+\frac{1}{s}}e^{\tau\abs{j_m}^{1/s}}|\hat{u}_j| \big)\big(\abs{k}_1 \abs{k_m}^{r-\frac{1}{2s}}\frac{1}{\abs{\ell_m}^{1-1/s}+\abs{k_m}^{1-1/s}}\notag\\
  &\quad\quad\quad\quad\quad\quad\quad\quad\quad\times e^{\tau\abs{k_m}^{1/s}}|\hat{J}_k|\big)\big(\abs{\ell_m}^{r+\frac{1}{2s}}e^{\tau\abs{\ell_m}^{1/s}}|\hat{J}_\ell|\big)\bigg]\notag\\
  &\leq C\tau^2 \norm{\omega}_{Y_{r,\tau,s}}\norm{J}_{X_{r,\tau,s}} \norm{\Lambda_m^{r+\frac{1}{2s}}e^{\tau\Lambda_m^{1/s}} J}_{L^2} \notag\\
  &\quad +C\tau^2\norm{\omega}_{X_{r,\tau,s}}\norm{J}_{Y_{r,\tau,s}}\norm{\Lambda_m^{r+\frac{1}{2s}}e^{\tau\Lambda_m^{1/s}} J}_{L^2}.
\end{align}

In order to estimate the second term ${R}^{(2)}_{u,J,J}$, we use the mean value theorem again.
There exists a constant $\tilde{\theta}_{m,k,\ell}\in (0,1)$ such that
\begin{align}\label{3.10}
  &\abs{\ell_m}^{r+\frac{1}{2s}}-\abs{k_m}^{r+\frac{1}{2s}} \notag\\
  &=(r+\frac{1}{2s})(\abs{\ell_m}-\abs{k_m})\big[ (\tilde{\theta}_{m,k,\ell}\abs{\ell_m}+(1-\tilde{\theta}_{m,k,\ell})\abs{k_m})^{r+\frac{1}{2s}-1}-\abs{k_m}^{r+\frac{1}{2s}-1}\big] \notag\\
  &\quad+(r+\frac{1}{2s})(\abs{\ell_m}-\abs{k_m})\abs{k_m}^{r+\frac{1}{2s}-1}.
\end{align}
The first term on the right side of \eqref{3.10} is bounded by
$C\abs{j_m}^2(\abs{j_m}^{r-2+\frac{1}{2s}}+\abs{k_m}^{r-2+\frac{1}{2s}})$ for some constant $C$ depending on $r, s$.
For the latter term we use the decomposition \eqref{3.5} again,
and note in the region $\{\sgn(k_m+j_m)\sgn(k_m)=-1\}$ we have $\abs{k_m}\leq\abs{j_m}$ and $e^{\tau\abs{k_m}^{1/s}}\leq 1+\tau\abs{j_m}^{1/s}e^{\tau\abs{j_m}^{1/s}}$. Combining these facts, we have
\begin{align}\label{3.11}
  &\abs{{R}^{(2)}_{u,J,J}}\notag\\
  &\leq C\tau\sum_{j+k+\ell=0}\bigg[\abs{j_m}^2(\abs{j_m}^{r-2+\frac{1}{2s}}+\abs{k_m}^{r-2+\frac{1}{2s}})(1+\tau\abs{k_m}^{1/s}e^{\tau\abs{k_m}^{1/s}})\notag\\
  &\quad\quad\quad\quad\quad\quad\qquad\times |\hat{u}_j|\abs{k}_1 |\hat{J}_k|\abs{\ell_m}^{r+\frac{1}{2s}}e^{\tau\abs{\ell_m}^{1/s}}|\hat{J}_\ell|\bigg] \notag\\
  &\quad+ C\tau\abs{\inner{\partial_m u\cdot\nabla H_m\Lambda_m^{r-1+\frac{1}{2s}}e^{\tau\Lambda_m^{1/s}}J,
  \Lambda_m^{r+\frac{1}{2s}}e^{\tau\Lambda_m^{1/s}}J}_{L^2(\mathbb{T}^3)}}\notag\\
  &\quad+C\tau\sum_{j+k+\ell=0}\abs{j_m}^{r+\frac{1}{2s}}|\hat{u}_j|(1+\tau\abs{j_m}^{1/s}e^{\tau\abs{j_m}^{1/s}})\abs{k}_1 |\hat{J}_k|\abs{\ell_m}^{r+\frac{1}{2s}}e^{\tau\abs{\ell_m}^{1/s}}|\hat{J}_\ell|\notag\\
  &\leq C\tau\norm{\omega}_{H^r}\norm{J}_{H^r}\norm{\Lambda_m^r e^{\tau\Lambda_m^{1/s}} J}_{L^2}+C\tau\norm{\nabla u}_{L^\infty}\norm{J}_{Y_{r,\tau,s}}\norm{\Lambda_m^{r+\frac{1}{2s}}e^{\tau\Lambda_m^{1/s}}J}_{L^2}\notag\\
  &\quad +C\tau^2\norm{\omega}_{H^r}\norm{J}_{Y_{r,\tau,s}}^2 +C\tau^2 \norm{J}_{H^r}\norm{\omega}_{Y_{r,\tau,s}}\norm{J}_{Y_{r,\tau,s}},
\end{align}
where we have used $\abs{\ell_m}^{\frac{1}{2s}}\leq \abs{j_m}^{\frac{1}{2s}}+\abs{k_m}^{\frac{1}{2s}}$ for the estimate of the first term. In order to estimate the third term ${R}^{(3)}_{u,J,J}$, we once again expand the $\abs{\ell_m}^{r-\frac{1}{2s}}-\abs{k_m}^{r-\frac{1}{2s}}$ by mean value theorem,
\begin{align}\label{3.12}
  & \abs{\ell_m}^{r-\frac{1}{2s}}-\abs{k_m}^{r-\frac{1}{2s}} \notag\\
  &=(r-\frac{1}{2s})(\abs{\ell_m}-\abs{k_m})\big[(\theta^\ast_{m,k,\ell} \abs{\ell_m}+(1-\theta^\ast_{m,k,\ell})\abs{k_m})^{r-\frac{1}{2s}-1}-\abs{k_m}^{r-\frac{1}{2s}-1}\big] \notag\\
  &\quad+(r-\frac{1}{2s})(\abs{\ell_m}-\abs{k_m})\abs{k_m}^{r-1-\frac{1}{2s}}.
\end{align}
Using similar method as above, ${R}^{(3)}_{u,J,J}$ can also be bounded by
\begin{align}\label{3.13}
  &\abs{R_{u,J,J}^{(3)}} \notag\\
  &\leq C\sum_{j+k+\ell=0}\tau\bigg[\abs{k_m}^{1/s}\abs{j_m}^2(\abs{j_m}^{r-\frac{1}{2s}-2}+\abs{k_m}^{r-\frac{1}{2s}-2})(1+\tau\abs{k_m}^{1/s}e^{\tau\abs{k_m}^{1/s}})\notag\\
  &\qquad\qquad\qquad\times |\hat{u}_j|\abs{k}_1 |\hat{J}_k| |\hat{J}_\ell|\abs{\ell_m}^{r+\frac{1}{2s}}e^{\tau\abs{\ell_m}^{1/s}}\bigg]  \notag\\
  &\quad+C\sum_{j+k+\ell=0}\tau\abs{j_m}^{r+\frac{1}{2s}}(1+\tau\abs{j_m}^{1/s}e^{\tau\abs{j_m}^{1/s}}) |\hat{u}_j|\abs{k}_1 |\hat{J}_k| |\hat{J}_\ell|\abs{\ell_m}^{r+\frac{1}{2s}}e^{\tau\abs{\ell_m}^{1/s}}\notag\\
  &\quad+C\tau\abs{\inner{\partial_m u\cdot\nabla H_m\Lambda_m^{r+\frac{1}{2s}-1}e^{\tau\Lambda_m^{1/s}}J,
  \Lambda_m^{r+\frac{1}{2s}}e^{\tau\Lambda_m^{1/s}}J}_{L^2(\mathbb{T}^3)}}\notag\\
  &\leq C\tau \norm{\omega}_{H^r}\norm{J}_{H^r}\norm{\Lambda_m^r e^{\tau \Lambda_m^{1/s}} J}_{L^2}+C\tau \norm{\nabla u}_{L^\infty}\norm{J}_{Y_{r,\tau,s}}\norm{\Lambda_m^{r+\frac{1}{2s}} e^{\tau \Lambda_m^{1/s}} J}_{L^2}\notag\\
  &\quad +C\tau^2\norm{\omega}_{H^r}\norm{J}_{Y_{r,\tau,s}}\norm{\Lambda_m^{r+\frac{1}{2s}} e^{\tau \Lambda_m^{1/s}} J}_{L^2} \notag\\
  &\quad +C\tau^2 \norm{J}_{H^r}\norm{\omega}_{Y_{r,\tau,s}}\norm{\Lambda_m^{r+\frac{1}{2s}} e^{\tau \Lambda_m^{1/s}} J}_{L^2},
\end{align}
where we also used $|\ell_m|^{\frac{1}{2s}}\leq |j_m|^{\frac{1}{2s}}+|k_m|^{\frac{1}{2s}}$ for the estimate of the first term and here $C$ is a constant depending on $r, s$ for $r>\frac{5}{2}+\frac{3}{2s}$. Combining \eqref{3.9}, \eqref{3.11}, \eqref{3.13} and the estimate \eqref{3.6} on $T_{u,J,J}^{(1)}$ in \eqref{3.3}, we have proven that the term $|(u\cdot\nabla J,\Lambda_m^{2r}e^{2\tau\Lambda_m^{1/s}}J)_{L^2(\mathbb{T}^3)}|$ is bounded by the right of \eqref{3.2}. 

In order to estimate the coupled term $(J\cdot\nabla u,\Lambda_m^{2r}e^{2\tau\Lambda_m^{1/s}}J)_{L^2(\mathbb{T}^3)}$, we treat it as follows.
First of all, we note that $\norm{\omega}_{L^\infty}\leq \norm{\nabla u}_{L^\infty}$, $\norm{J}_{L^\infty}\leq \norm{\nabla h}_{L^\infty}$ and
\begin{align}\label{3.14}
  & \abs{\inner{\Lambda_m^r e^{\tau\Lambda_m^{1/s}}J\cdot\nabla u,\Lambda_m^r e^{\tau\Lambda_m^{1/s}}J}_{L^2(\mathbb{T}^3)}}+\abs{\inner{J\cdot\nabla\Lambda_m^r e^{\tau\Lambda_m^{1/s}}u,\Lambda_m^r e^{\tau \Lambda_m^{1/s}}J}_{L^2(\mathbb{T}^3)}} \notag\\
   &\leq C\norm{\nabla u}_{L^\infty}\norm{\Lambda_m^r e^{\tau\Lambda_m^{1/s}}J}_{L^2}^2+C\norm{\nabla h}_{L^\infty}\norm{\omega}_{X_{r,\tau,s}}\norm{\Lambda_m^r e^{\tau\Lambda_m^{1/s}}J}_{L^2}.
\end{align}
Then we substract $(J\cdot\nabla u,\Lambda_m^{2r}e^{2\tau\Lambda_m^{1/s}}J)_{L^2(\mathbb{T}^3)}$ by
$$
\inner{\Lambda_m^r e^{\tau\Lambda_m^{1/s}}J\cdot\nabla u,\Lambda_m^r e^{\tau\Lambda_m^{1/s}}J}_{L^2(\mathbb{T}^3)}+\inner{J\cdot\nabla \Lambda_m^r e^{\tau\Lambda_m^{1/s}} u,\Lambda_m^r e^{\tau\Lambda_m^{1/s}}J}_{L^2(\mathbb{T}^3)}
$$
and we consider their differences
\begin{align}\label{3.15}
  &\inner{J\cdot\nabla u,\Lambda_m^{2r}e^{2\tau\Lambda_m^{1/s}}J}_{L^2(\mathbb{T}^3)}-\inner{\Lambda_m^r e^{\tau\Lambda_m^{1/s}}J\cdot\nabla u,\Lambda_m^r e^{\tau\Lambda_m^{1/s}}J} _{L^2(\mathbb{T}^3)}\notag\\
  &\quad-\inner{J\cdot\nabla \Lambda_m^r e^{\tau\Lambda_m^{1/s}}u,\Lambda_m^r e^{\tau\Lambda_m^{1/s}}J}_{L^2(\mathbb{T}^3)} \notag\\
  &=i(2\pi)^3\sum_{j+k+\ell=0}(\abs{\ell_m}^r-\abs{j_m}^r)(e^{\tau\abs{\ell_m}^{1/s}}-e^{\tau\abs{k_m}^{1/s}})(\hat{J}_j\cdot k)(\hat{u}_k\cdot\hat{J}_\ell)\abs{\ell_m}^r e^{\tau\abs{\ell_m}^{1/s}} \notag\\
  &\quad+i(2\pi)^3\sum_{j+k+\ell=0}(\abs{\ell_m}^r-\abs{k_m}^r-\abs{j_m}^r)e^{\tau\abs{k_m}^{1/s}}(\hat{J}_j\cdot k)(\hat{u}_k\cdot\hat{J}_\ell)\abs{\ell_m}^r e^{\tau\abs{\ell_m}^{1/s}} \notag\\
  &\quad+i(2\pi)^3\sum_{j+k+\ell=0}\abs{j_m}^r(e^{\tau\abs{\ell_m}^{1/s}}-e^{\tau\abs{j_m}^{1/s}})(\hat{J}_j\cdot k)(\hat{u}_k\cdot\hat{J}_\ell)\abs{\ell_m}^r e^{\tau\abs{\ell_m}^{1/s}} \notag\\
  &:=\mathcal{T}_{J,u,J}^{(1)}+\mathcal{T}_{J,u,J}^{(2)}+\mathcal{T}_{J,u,J}^{(3)}.
\end{align}
It rested to estimate the right hand side of \eqref{3.15}. For the first term $\mathcal{T}_{J,u,J}^{(1)}$, we appeal to the mean value theorem for $\abs{\ell_m}^r-\abs{j_m}^r$, and $\abs{e^\xi- 1}\leq\abs{\xi} e^{\abs{\xi}}$, for $\xi=\tau(|\ell_m|^{1/s}-|k_m|^{1/s})\in\mathbb{R}$, and the inequality \eqref{3.8},
\begin{align}\label{3.16}
  \big|(\abs{\ell_m}^r-\abs{j_m}^r) &(e^{\tau\abs{\ell_m}^{1/s}}-e^{\tau\abs{k_m}^{1/s}}) \big| \notag\\
  &\leq C\tau \frac{\abs{j_m}\abs{k_m}^r+\abs{k_m}\abs{j_m}^r}{\abs{\ell_m}^{1-1/s}+\abs{k_m}^{1-1/s}} e^{\tau\abs{j_m}^{1/s}}e^{\tau\abs{k_m}^{1/s}} \notag\\
  &\leq C\tau |j_m|^{1/s}|k_m|^r e^{\tau |j_m|^{1/s}}(1+\tau|k_m|^{1/s}e^{\tau|k_m|^{1/s}}) \notag\\
  &\quad+C\tau |j_m|^r |k_m|^{1/s}(1+\tau |j_m|^{1/s}e^{\tau |j_m|^{1/s}})e^{\tau |k_m|^{1/s}}.
\end{align}
Substituting the right of \eqref{3.16} into $\mathcal{T}_{J,u,J}^{(1)}$ and using again the inequality $e^{\tau|j_m|^{1/s}}\leq 1+\tau|j_m|^{1/s}e^{\tau|j_m|^{1/s}}$ and $e^{\tau|k_m|^{1/s}}\leq 1+\tau|k_m|^{1/s}e^{\tau|k_m|^{1/s}}$ for the order-$\tau$ term, we have
\begin{align}\label{3.17}
  \abs{\mathcal{T}_{J,u,J}^{(1)}} & \leq C\tau\norm{\omega}_{H^r}\norm{J}_{H^r}\norm{\Lambda_m^r e^{\tau\Lambda_m^{1/s}}J}_{L^2}+C\tau^2 \norm{\omega}_{H^r}\norm{J}_{Y_{r,\tau,s}}^2 \notag\\
  &\quad+C\tau^2 \norm{J}_{H^r}\norm{\omega}_{Y_{r,\tau,s}}\norm{J}_{Y_{r,\tau,s}} +C\tau^2 \norm{J}_{X_{r,\tau,s}}\norm{\omega}_{Y_{r,\tau,s}}\norm{\Lambda_m^{r+\frac{1}{2s}}e^{\tau\Lambda_m^{1/s}}J}_{L^2} \notag\\
  &\quad+C\tau^2 \norm{\omega}_{X_{r,\tau,s}}\norm{J}_{Y_{r,\tau,s}}^2,
\end{align}
where we used the inequalities $|k_m|^{\frac{1}{2s}}\leq |j_m|^{\frac{1}{2s}}+|\ell_m|^{\frac{1}{2s}}$ and $|j_m|^{\frac{1}{2s}}\leq |k_m|^{\frac{1}{2s}}+|\ell_m|^{\frac{1}{2s}}$.
For the second term $\mathcal{T}_{J,u,J}^{(2)}$, by the mean value theorem we have
\begin{equation*}
  \abs{(\abs{\ell_m}^r-\abs{k_m}^r)-\abs{j_m}^r}\leq C\abs{j_m}(\abs{j_m}^{r-1}+\abs{k_m}^{r-1})+\abs{j_m}^r.
\end{equation*}
Using the inequality $e^x\leq e+x^2 e^x$, for all $x=\tau|k_m|^{1/s}$, then we obtain
\begin{align}\label{3.18}
  \abs{\mathcal{T}_{J,u,J}^{(2)}} &\leq C\norm{J}_{H^r}\norm{\omega}_{H^r} \norm{\Lambda_m^r e^{\tau\Lambda_m^{1/s}}J}_{L^2} \notag\\
  &\quad+C\tau^2 \norm{J}_{H^r}\norm{\omega}_{Y_{r,\tau,s}}\norm{\Lambda_m^{r+\frac{1}{2s}} e^{\tau\Lambda_m^{1/s}}J}_{L^2},
\end{align}
where we used $\abs{k_m}^{\frac{1}{2s}}\leq \abs{j_m}^{\frac{1}{2s}}+\abs{\ell_m}^{\frac{1}{2s}}$ in the estimate of the second term on right of \eqref{3.18}. For the third term $\mathcal{T}_{J,u,J}^{(3)}$, we use the inequality $\abs{e^\xi-1}\leq \abs{\xi}e^{\abs{\xi}}$, for $\xi=\tau(|\ell_m|^{1/s}-|j_m|^{1/s})\in\mathbb{R}$, and the inequality $e^\xi\leq 1+\xi e^\xi$, for $\xi=\tau|j_m|^{1/s}$ and $\xi=\tau|k_m|^{1/s}$, and the triangle inequality $\abs{j_m}^{\frac{1}{2s}}\leq \abs{k_m}^{\frac{1}{2s}}+\abs{\ell_m}^{\frac{1}{2s}}$.
Thus we finally have
\begin{align}\label{3.19}
  \abs{\mathcal{T}_{J,u,J}^{(3)}} &\leq C\tau\norm{J}_{H^r}\norm{\omega}_{H^r} \norm{\Lambda_m^r e^{\tau\Lambda_m^{1/s}}J}_{L^2}+C\tau^2\norm{\omega}_{H^r}\norm{J}_{Y_{r,\tau,s}}^2 \notag\\
  &\quad+ C\tau^2\norm{\omega}_{X_{r,\tau,s}}
  \norm{\Lambda_m^{r+\frac{1}{2s}}e^{\tau\Lambda_m^{1/s}}J}_{L^2}^2.
\end{align}
Collecting \eqref{3.17}, \eqref{3.18}, \eqref{3.19} and \eqref{3.14}, we have the estimate
\begin{align}\label{3.20}
  &\abs{\inner{J\cdot\nabla u,\Lambda_m^{2r}e^{2\tau\Lambda_m^{1/s}}J}_{L^2(\mathbb{T}^3)}}\notag\\
  & \leq C(\norm{\nabla u}_{L^\infty}\norm{J}_{X_{r,\tau,s}}+\norm{\nabla h}_{L^\infty}\norm{\omega}_{X_{r,\tau,s}}) \norm{J}_{X_{r,\tau,s}}\notag \\
  &\quad+ C\tau\norm{\Psi}_{H^r}^2\norm{\Lambda_m^r e^{\tau\Lambda_m^{1/s}}J}_{L^2}
  + C\tau^2(\norm{\Psi}_{H^r}+\norm{\Psi}_{X_{r,\tau,s}})\norm{\Psi}_{Y_{r,\tau,s}}\norm{J}_{Y_{r,\tau,s}}.
\end{align}
Obviously the right of \eqref{3.20} is also bounded by the right of \eqref{3.2}, thus the proof is complete.
\end{proof}

In the following, we give the main Lemma concerning the estimates of the coupled nonlinear terms.
\begin{lemma}\label{Lemma 3.6}
Let $m=1,2,3$. Let $\tau>0$, $r>\frac{5}{2}+\frac{3}{2s}$, and $u=\mathcal{K}* \omega, h=\mathcal{K}* J$ with $\omega, J\in Y_{r,\tau,s}$. Then we have the following upper bounded estimates :
   \begin{align}\label{3.21}
          &\abs{\inner{h\cdot\nabla J,\Lambda_m^{2r} e^{2\tau \Lambda_m^{1/s}}\omega}_{L^2(\mathbb{T}^3)} +\inner{h\cdot\nabla \omega,\Lambda_m^{2r} e^{2\tau \Lambda_m^{1/s}}J}_{L^2(\mathbb{T}^3)}} \notag \\
          &\quad \leq C\big(\tau\norm{\nabla h}_{L^\infty}+\tau^2\norm{\Psi}_{{H}^r}+\tau^2\norm{\Psi}_{X_{r,\tau,s}} \big)\norm{\Psi}^2_{Y_{r,\tau,s}}\notag\\
          & \quad\quad + C\big(\norm{\nabla h}_{L^\infty}\norm{\Psi}_{X_{r,\tau,s}}+(1+\tau)\norm{\Psi}_{{H}^r}^2\big)\norm{\Psi}_{X_{r,\tau,s}},
    \end{align}
  \begin{align}\label{3.22}
           & \abs{\inner{J\cdot\nabla h,\Lambda_m^{2r} e^{2\tau \Lambda_m^{1/s}}\omega}_{L^2(\mathbb{T}^3)}}+\abs{\inner{\omega\cdot\nabla h,\Lambda_m^{2r} e^{2\tau \Lambda_m^{1/s}}J}_{L^2(\mathbb{T}^3)}} \notag\\
           & \leq C(\norm{\nabla u}_{L^\infty}+\norm{\nabla h}_{L^\infty})\norm{\Psi}_{X_{r,\tau,s}}^2+C\tau\norm{\Psi}_{{H}^r}^2\norm{\Psi}_{X_{r,\tau,s}}\notag\\
           &\qquad+C\tau^2(\norm{\Psi}_{{H}^r}+\norm{\Psi}_{X_{r,\tau,s}})\norm{\Psi}_{Y_{r,\tau,s}}^2,
  \end{align}
where $C$ is a constant depending only on $r,s$.
\end{lemma}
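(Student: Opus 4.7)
The plan is to treat both estimates on the Fourier side via Plancherel's theorem, following closely the strategy used in the proof of Lemma \ref{lemma 3.5}. In each case I would subtract a carefully chosen auxiliary quantity from the left-hand side; the subtracted term is either identically zero by a divergence-free cancellation or is directly estimable in terms of the $L^\infty$-norms of $\nabla u$ and $\nabla h$, and the remaining commutator-type expression splits into pieces bounded via mean-value-theorem decompositions of $\abs{\ell_m}^{r}-\abs{k_m}^{r}$ and $e^{\tau\abs{\ell_m}^{1/s}}-e^{\tau\abs{k_m}^{1/s}}$, exactly as in the terms $T^{(1)}_{u,J,J}$, $T^{(2)}_{u,J,J}$ and $\mathcal{T}^{(i)}_{J,u,J}$ of Lemma \ref{lemma 3.5}.

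For \eqref{3.21}, the decisive observation is the cross cancellation
\[\inner{h\cdot\nabla \Lambda_m^r e^{\tau\Lambda_m^{1/s}} J,\,\Lambda_m^r e^{\tau\Lambda_m^{1/s}}\omega}_{L^2}+\inner{h\cdot\nabla \Lambda_m^r e^{\tau\Lambda_m^{1/s}}\omega,\,\Lambda_m^r e^{\tau\Lambda_m^{1/s}} J}_{L^2}=0,\]
which follows from $\divg h = 0$ and integration by parts. Subtracting this zero from the paired expression on the left of \eqref{3.21} and applying Plancherel produces a sum of two types of Fourier integrals indexed by $j+k+\ell=0$: one carrying the factor $(\abs{\ell_m}^{r}-\abs{k_m}^{r})e^{\tau\abs{k_m}^{1/s}}$ and another carrying $\abs{\ell_m}^{r}\bigl(e^{\tau\abs{\ell_m}^{1/s}}-e^{\tau\abs{k_m}^{1/s}}\bigr)$. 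I would then decompose these exactly as $T^{(1)}_{u,J,J}$ and $T^{(2)}_{u,J,J}$ were in Lemma \ref{lemma 3.5}: the splitting \eqref{3.5} together with the commutator identity $\partial_m h\cdot\nabla H_m\Lambda_m^{r-1}e^{\tau\Lambda_m^{1/s}}(\cdot)$ yields the top-order $\norm{\nabla h}_{L^\infty}\norm{\Psi}_{X_{r,\tau,s}}$ contribution without a factor of $\tau$, while \eqref{3.8} and the elementary bound $\abs{e^\xi-1-\xi}\le \xi^2 e^{\abs{\xi}}$ absorb the higher-order-in-$\tau$ contributions into $\tau\norm{\nabla h}_{L^\infty}$ and $\tau^2$ terms controlled by $\norm{\Psi}_{Y_{r,\tau,s}}^2$.

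For \eqref{3.22} no direct divergence-free cancellation is available, since now $h$ is convected and $J$ or $\omega$ convects. I would therefore imitate the treatment of $\inner{J\cdot\nabla u,\Lambda_m^{2r}e^{2\tau\Lambda_m^{1/s}}J}_{L^2}$ in Lemma \ref{lemma 3.5}: from $\inner{J\cdot\nabla h,\Lambda_m^{2r}e^{2\tau\Lambda_m^{1/s}}\omega}_{L^2}$ subtract the two auxiliary quantities $\inner{\Lambda_m^{r}e^{\tau\Lambda_m^{1/s}}J\cdot\nabla h,\Lambda_m^{r}e^{\tau\Lambda_m^{1/s}}\omega}_{L^2}$ and $\inner{J\cdot\nabla\Lambda_m^{r}e^{\tau\Lambda_m^{1/s}}h,\Lambda_m^{r}e^{\tau\Lambda_m^{1/s}}\omega}_{L^2}$, each of which is bounded directly by H\"older's inequality, Lemma \ref{Lemma 3.3} and the $L^\infty$-norms of $\nabla h$ and $\nabla u$, producing the first term on the right of \eqref{3.22}. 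The residual difference then splits on the Fourier side into three pieces analogous to $\mathcal{T}^{(1)}_{J,u,J}$, $\mathcal{T}^{(2)}_{J,u,J}$ and $\mathcal{T}^{(3)}_{J,u,J}$, and is handled by the same mean-value-theorem expansions to supply the $\tau$- and $\tau^2$-order terms on the right of \eqref{3.22}. The coupled term $\inner{\omega\cdot\nabla h,\Lambda_m^{2r}e^{2\tau\Lambda_m^{1/s}}J}_{L^2}$ is treated symmetrically with the roles of $\omega$ and $J$ exchanged in the auxiliary quantities.

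I expect the main obstacle to be bookkeeping rather than any genuinely new estimate: one must verify that each Fourier multiplier piece produces precisely the norm combinations listed on the right-hand sides of \eqref{3.21}--\eqref{3.22}. The crucial checkpoint is that the top-order derivative contributions collapse, via the $H_m$-commutator trick together with the decomposition \eqref{3.5}, to $\norm{\nabla h}_{L^\infty}\norm{\Psi}_{X_{r,\tau,s}}$ and $(\norm{\nabla u}_{L^\infty}+\norm{\nabla h}_{L^\infty})\norm{\Psi}_{X_{r,\tau,s}}$ without an extra $\tau$-factor, so that they feed cleanly into the Gronwall step of the main theorem; all remaining contributions must close through $\norm{\Psi}_{H^r}$, $\norm{\Psi}_{X_{r,\tau,s}}$ and $\norm{\Psi}_{Y_{r,\tau,s}}$. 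The regularity threshold $r>5/2+3/(2s)$ is used exactly as in Lemma \ref{lemma 3.5} to secure the necessary Sobolev bounds and to keep the auxiliary sums involving $\abs{k_m}^{r-1/s-1}$ and similar weights convergent.
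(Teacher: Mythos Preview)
Your proposal is correct and follows essentially the same approach as the paper. For \eqref{3.21} the paper uses precisely the cross cancellation you state, obtains the symmetric pair $T_{h,J,\omega}+T_{h,\omega,J}$, and splits each into $T^{(1)}$ and $T^{(2)}$ pieces handled exactly as in Lemma \ref{lemma 3.5}; for \eqref{3.22} the paper subtracts the same two auxiliary quantities you name (bounded via $\norm{J}_{L^\infty}\le\norm{\nabla h}_{L^\infty}$ and, symmetrically, $\norm{\omega}_{L^\infty}\le\norm{\nabla u}_{L^\infty}$), producing three residual pieces $\mathcal{T}^{(1)}_{J,h,\omega},\mathcal{T}^{(2)}_{J,h,\omega},\mathcal{T}^{(3)}_{J,h,\omega}$ treated by the same mean-value expansions as $\mathcal{T}^{(i)}_{J,u,J}$.
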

We note that the key point in the proof of Lemma \ref{Lemma 3.6} is that the coefficients of $\tau$ and $\tau^2$ are carefully arranged such that on one hand we can obtain an upper bound of $\norm{\omega}_{X_{t,\tau,s}}$, on the other hand we can obtain a lower bound of $\tau$ in terms of $\norm{\nabla u}_{L^\infty}$ and $\norm{\nabla h}_{L^\infty}$.

\begin{proof}[Proof of \eqref{3.21}]
Since $h=\mathcal{K} * J$ is divergence-free, we have the following cancellation property, by integration by parts and the symmetry structure, 
  $$
   \inner{h\cdot\nabla\Lambda_m^r e^{\tau\Lambda_m^{1/s}}J,\Lambda_m^r e^{\tau\Lambda_m^{1/s}}\omega}_{L^2(\mathbb{T}^3)}+\inner{h\cdot\nabla \Lambda_m^r e^{\tau\Lambda_m^{1/s}}\omega,\Lambda_m^r e^{\tau\Lambda_m^{1/s}}J}_{L^2(\mathbb{T}^3)}=0.
  $$
Thus we have 
\begin{align}\label{3.23}
  & \inner{h\cdot\nabla J,\Lambda_m^{2r} e^{2\tau\Lambda_m^{1/s}}\omega}_{L^2(\mathbb{T}^3)}+\inner{h\cdot\nabla \omega,\Lambda_m^{2r} e^{2\tau\Lambda_m^{1/s}}J} _{L^2(\mathbb{T}^3)}\notag\\
  &=\inner{h\cdot\nabla J,\Lambda_m^{2r} e^{2\tau\Lambda_m^{1/s}}\omega}_{L^2(\mathbb{T}^3)}+\inner{h\cdot\nabla \omega,\Lambda_m^{2r} e^{2\tau\Lambda_m^{1/s}}J}_{L^2(\mathbb{T}^3)}  \notag\\
  &\quad -\inner{h\cdot\nabla\Lambda_m^r e^{\tau\Lambda_m^{1/s}}J,\Lambda_m^r e^{\tau\Lambda_m^{1/s}}\omega}_{L^2(\mathbb{T}^3)}-\inner{h\cdot\nabla \Lambda_m^r e^{\tau\Lambda_m^{1/s}}\omega,\Lambda_m^r e^{\tau\Lambda_m^{1/s}}J}_{L^2(\mathbb{T}^3)} \notag\\
  &=i(2\pi)^3\sum_{ j+k+\ell=0}(\hat{h}_j\cdot k)(\abs{\ell_m}^r e^{\tau\abs{\ell_m}^{1/s}}-\abs{k_m}^r e^{\tau \abs{k_m}^{1/s}})(\hat{J}_k\cdot\hat{\omega}_\ell)\abs{\ell_m}^r e^{\tau\abs{\ell_m}^{1/s}}\notag \\
  &\quad+i(2\pi)^3\sum_{j+k+\ell=0}(\hat{h}_j\cdot k)(\abs{\ell_m}^r e^{\tau\abs{\ell_m}^{1/s}}-\abs{k_m}^r e^{\tau \abs{k_m}^{1/s}})(\hat{\omega}_k\cdot\hat{J}_\ell)\abs{\ell_m}^r e^{\tau\abs{\ell_m}^{1/s}}\notag\\
  &:=T_{h,J,\omega}+T_{h,\omega,J},
\end{align}
where the summation are taken over $\{j,k,\ell\in\mathbb{Z}^3;j+k+\ell=0, \ell_m\neq0, j,k,\ell\neq 0\}$ and we will sometimes use this property without mentioning it in the following.
Due to the symmetry of $T_{h,J,\omega}$ and $T_{h,\omega,J}$ on the right hand side of \eqref{3.23}, it suffices to estimate one of them.
Let us consider for example ${T}_{h,\omega,J}$. It also can be split into the summation of two terms ${T}_{h,\omega,J}={T}_{h,\omega,J}^{(1)}+{T}_{h,\omega,J}^{(2)}$, where
\begin{align*}
  {T}^{(1)}_{h,\omega,J}= &i(2\pi)^3 \sum_{j+k+\ell=0}(\hat{h}_j\cdot k)(\abs{\ell_m}^r-\abs{k_m}^r)e^{\tau\abs{k_m}^{1/s}}(\hat{\omega}_k\cdot\hat{J}_\ell)\abs{\ell_m}^r e^{\tau\abs{\ell_m}^{1/s}},\\
  {T}^{(2)}_{h,\omega,J}= &i(2\pi)^3 \sum_{j+k+\ell=0}(\hat{h}_j\cdot k)\abs{\ell_m}^r (e^{\tau\abs{\ell_m}^{1/s}}-e^{\tau\abs{k_m}^{1/s}})(\hat{\omega}_k\cdot\hat{J}_\ell)\abs{\ell_m}^r e^{\tau\abs{\ell_m}^{1/s}}\,.
\end{align*}
In order to estimate ${T}^{(1)}_{h,\omega,J}$, we appeal to the expansion of \eqref{3.4}, \eqref{3.5} and the arguments of \eqref{3.6}. Then we immediately have 
\begin{align}\label{3.24}
  &\abs{{T}^{(1)}_{h,\omega,J}} \notag\\
  &\leq  C\sum_{j+k+\ell=0}\bigg\{(\abs{j_m}^{r}+\abs{j_m}^2\abs{k_m}^{r-2})(e+\tau^2\abs{k_m}^{2/s}e^{\tau\abs{k_m}^{1/s}})
  |\hat{h}_j|\abs{k}_1\abs{\hat{\omega}_k}|\hat{J}_\ell| \notag\\
  &\quad\quad\quad\quad\quad\quad\quad\quad\times\abs{\ell_m}^r e^{\tau\abs{\ell_m}^{1/s}}\bigg\} \notag \\
  &\quad+C\bigg|\sum_{j+k+\ell=0}j_m\sgn(k_m)\abs{k_m}^{r-1}e^{\tau\abs{k_m}^{1/s}}(\hat{h}_j\cdot k)(\hat{\omega}_k\cdot\hat{J}_\ell)\abs{\ell_m}^r e^{\tau\abs{\ell_m}^{1/s}}\bigg| \notag\\
  &\leq C\norm{\nabla h}_{L^\infty}\norm{\omega}_{X_{r,\tau,s}}\norm{J}_{X_{r,\tau,s}}+C\norm{\omega}_{{H}^r}\norm{J}_{{H}^r}\norm{J}_{X_{r,\tau,s}} \notag \\
  &\quad+C\tau^2\norm{J}_{{H}^r}\norm{\omega}_{Y_{r,\tau,s}}\norm{J}_{Y_{r,\tau,s}},
\end{align}
where $C$ is some constant depending on $r$. Still the supremum of gradient of $h$ on the right hand side of \eqref{3.24} come from the use of Plancherel's theorem as follows,
\begin{align*}
  &\quad\bigg|\sum_{j+k+\ell=0}j_m\sgn(k_m)\abs{k_m}^{r-1}e^{\tau\abs{k_m}^{1/s}}(\hat{h}_j\cdot k)(\hat{\omega}_k\cdot\hat{J}_\ell)\abs{\ell_m}^r e^{\tau\abs{\ell_m}^{1/s}}\bigg| \\
  &=\bigg|\inner{\partial_m h\cdot\nabla H_m\Lambda_m^{r-1}e^{\tau\Lambda_m^{1/s}}\omega,\Lambda_m^r e^{\tau \Lambda_m^{1/s}}J}_{L^2(\mathbb{T}^3)}\bigg| \\
  &\leq \norm{\nabla h}_{L^\infty} \norm{\omega}_{X_{r,\tau,s}}\|\Lambda_m^r e^{\tau\Lambda_m^{1/s}}J\|_{L^2}.
\end{align*}
To estimate ${T}_{h,\omega,J}^{(2)}$, like \eqref{3.7}, we rewrite it into the sum of the following three terms, 
\begin{align}\label{3.25}
  &{T}^{(2)}_{h,\omega,J}\notag\\
  &=i(2\pi)^3\sum_{j+k+\ell=0}\bigg[(\hat{h}_j\cdot k)\abs{\ell_m}^{r-\frac{1}{2s}}\bigg(e^{\tau(\abs{\ell_m}^{1/s}-\abs{k_m}^{1/s})}-1\notag\\
  &\quad\quad\quad\quad\quad\quad\quad-\tau(\abs{\ell_m}^{1/s}-\abs{k_m}^{1/s})\bigg) e^{\tau\abs{k_m}^{1/s}}(\hat{\omega}_k\cdot\hat{J}_\ell)\abs{\ell_m}^{r+\frac{1}{2s}} e^{\tau\abs{\ell_m}^{1/s}}\bigg] \notag\\
  &\quad+i(2\pi)^3\sum_{j+k+\ell=0}\bigg[\tau(\abs{\ell_m}^{r+\frac{1}{2s}}-\abs{k_m}^{r+\frac{1}{2s}})e^{\tau\abs{k_m}^{1/s}}(\hat{h}_j\cdot k)(\hat{\omega}_k\cdot\hat{J}_\ell) \notag\\
  &\quad\quad\quad\quad\quad\quad\times\abs{\ell_m}^{r+\frac{1}{2s}}e^{\tau\abs{\ell_m}^{1/s}}\bigg] \notag\\
  &\quad-i(2\pi)^3\sum_{j+k+\ell=0}\bigg[\tau\abs{k_m}^{1/s}(\abs{\ell_m}^{r-\frac{1}{2s}}-\abs{k_m}^{r-\frac{1}{2s}})e^{\tau\abs{k_m}^{1/s}}(\hat{h}_j\cdot k)\notag\\
  &\quad\quad\quad\quad\quad\quad\times(\hat{\omega}_k\cdot\hat{J}_\ell)\abs{\ell_m}^{r+\frac{1}{2s}}e^{\tau\abs{\ell_m}^{1/s}}\bigg]\notag\\
  &:={R}_{h,\omega,J}^{(1)}+{R}_{h,\omega,J}^{(2)}+{R}_{h,\omega,J}^{(3)}.
\end{align}
The three terms ${R}_{h,\omega,J}^{(1)}, R_{h,\omega,J}^{(2)}$ and $R_{h,\omega,J}^{(3)}$ on right of \eqref{3.25} are estimated with the same arguments with \eqref{3.9}, \eqref{3.11} and \eqref{3.13}, thus we immediately have from the arguments of \eqref{3.8} and \eqref{3.9}, 
\begin{align}\label{3.26}
  &\abs{{R}^{(1)}_{h,\omega,J}}\notag\\
  &\leq C\tau^2\sum_{j+k+\ell=0}\bigg[|\hat{h}_j|\abs{k}_1(\abs{j_m}^{r-\frac{1}{2s}}+\abs{k_m}^{r-\frac{1}{2s}})\abs{j_m}^{1/s}\frac{\abs{j_m}}{\abs{\ell_m}^{1-1/s}+\abs{k_m}^{1-1/s}} \notag\\
  &\quad\quad\quad\quad\quad\quad\quad\quad\times e^{\tau\abs{j_m}^{1/s}} e^{\tau\abs{k_m}^{1/s}}\abs{\hat{\omega}_k}\abs{\ell_m}^{r+\frac{1}{2s}}e^{\tau\abs{\ell_m}^{1/s}}|\hat{J}_\ell|\bigg] \notag\\
  &\leq C\tau^2\sum_{j+k+\ell=0}\bigg[\big(\abs{j_m}^{r+\frac{1}{2s}+1}e^{\tau\abs{j_m}^{1/s}}|\hat{h}_j| \big)\big(\abs{k}_1 e^{\tau\abs{k_m}^{1/s}}\abs{\hat{\omega}_k}\big)\notag\\
  &\quad\quad\quad\quad\quad\times(\abs{\ell_m}^{r+\frac{1}{2s}}e^{\tau\abs{\ell_m}^{1/s}}|\hat{J}_\ell|)\bigg] \notag\\
  &\quad+C\tau^2\sum_{j+k+\ell=0}\bigg[\big(\abs{j_m}^{1+\frac{1}{s}}e^{\tau\abs{j_m}^{1/s}}|\hat{h}_j| \big)\big(\abs{k}_1 \abs{k_m}^{r-\frac{1}{2s}}\frac{1}{\abs{\ell_m}^{1-1/s}+\abs{k_m}^{1-1/s}}\notag\\
  &\quad\quad\quad\quad\quad\quad\quad\quad\quad\times e^{\tau\abs{k_m}^{1/s}}\abs{\hat{\omega}_k}\big)\big(\abs{\ell_m}^{r+\frac{1}{2s}}e^{\tau\abs{\ell_m}^{1/s}}|\hat{J}_\ell|\big)\bigg]\notag\\
  &\leq C\tau^2 \norm{J}_{Y_{r,\tau,s}}^2\norm{\omega}_{X_{r,\tau,s}}+C\tau^2\norm{J}_{X_{r,\tau,s}}\norm{\omega}_{Y_{r,\tau,s}}\norm{J}_{Y_{r,\tau,s}},
\end{align}
where $C$ is a appropriate constant.
By use of the expansion \eqref{3.10} and similar arguments as \eqref{3.11}, we have
\begin{align}\label{3.27}
  &\abs{{R}^{(2)}_{h,\omega,J}}\notag\\
  &\leq C\tau\abs{\inner{\partial_m h\cdot\nabla H_m\Lambda_m^{r-1+\frac{1}{2s}}e^{\tau\Lambda_m^{1/s}}\omega,
  \Lambda_m^{r+\frac{1}{2s}}e^{\tau\Lambda_m^{1/s}}J}_{L^2(\mathbb{T}^3)}}\notag\\
  &\quad+C\tau\sum_{j+k+\ell=0}\bigg[\abs{j_m}^2(\abs{j_m}^{r-2+\frac{1}{2s}}+\abs{k_m}^{r-2+\frac{1}{2s}})(1+\tau\abs{k_m}^{1/s}e^{\tau\abs{k_m}^{1/s}})\notag\\
  &\quad\quad\quad\quad\quad\quad\qquad\times |\hat{h}_j|\abs{k}_1\abs{\hat{\omega}_k}\abs{\ell_m}^{r+\frac{1}{2s}}e^{\tau\abs{\ell_m}^{1/s}}|\hat{J}_\ell|\bigg] \notag\\
  &\quad+C\tau\sum_{j+k+\ell=0}\abs{j_m}^{r+\frac{1}{2s}}|\hat{h}_j|(1+\tau\abs{j_m}^{1/s}e^{\tau\abs{j_m}^{1/s}})\abs{k}_1\abs{\hat{\omega}_k}\abs{\ell_m}^{r+\frac{1}{2s}}e^{\tau\abs{\ell_m}^{1/s}}|\hat{J}_\ell|\notag\\
  &\leq C\tau\norm{J}_{H^r}\norm{\omega}_{H^r}\norm{J}_{X_{r,\tau,s}}+C\tau\norm{\nabla h}_{L^\infty}\norm{\omega}_{Y_{r,\tau,s}}\norm{J}_{Y_{r,\tau,s}}\notag\\
  &\quad +C\tau^2 \norm{J}_{H^r}\norm{\omega}_{Y_{r,\tau,s}}\norm{J}_{Y_{r,\tau,s}}+C\tau^2\norm{\omega}_{H^r}\norm{J}_{Y_{r,\tau,s}}^2,
\end{align}
where $C$ is a positive constant.
By use of the expansion \eqref{3.12} and similar arguments as \eqref{3.13}, we have
\begin{align}\label{3.28}
  &\abs{{R}^{(3)}}_{h,\omega,J} \notag\\
  &\leq C\sum_{j+k+\ell=0}\tau\bigg[\abs{k_m}^{1/s}\abs{j_m}^2(\abs{j_m}^{r-\frac{1}{2s}-2}+\abs{k_m}^{r-\frac{1}{2s}-2})(1+\tau\abs{k_m}^{1/s}e^{\tau\abs{k_m}^{1/s}})\notag\\
  &\qquad\qquad\qquad\times |\hat{h}_j|\abs{k}_1 \abs{\hat{\omega}_k}|\hat{J}_\ell|\abs{\ell_m}^{r+\frac{1}{2s}}e^{\tau\abs{\ell_m}^{1/s}}\bigg]  \notag\\
  &\quad+C\sum_{j+k+\ell=0}\tau\abs{j_m}^{r+\frac{1}{2s}}(1+\tau\abs{j_m}^{1/s}e^{\tau\abs{j_m}^{1/s}}) |\hat{h}_j|\abs{k}_1 \abs{\hat{\omega}_k}|\hat{J}_\ell|\abs{\ell_m}^{r+\frac{1}{2s}}e^{\tau\abs{\ell_m}^{1/s}}\notag\\
  &\quad+C\tau\abs{\inner{\partial_m h\cdot\nabla H_m\Lambda_m^{r+\frac{1}{2s}-1}e^{\tau\Lambda_m^{1/s}}\omega,
  \Lambda_m^{r+\frac{1}{2s}}e^{\tau\Lambda_m^{1/s}}J}_{L^2(\mathbb{T}^3)}}\notag\\
  &\leq C\tau\norm{J}_{H^r}\norm{\omega}_{H^r}\norm{J}_{X_{r,\tau,s}}+C\tau \norm{\nabla h}_{L^\infty}\norm{\omega}_{Y_{r,\tau,s}}\norm{ J}_{Y_{r,\tau,s}}\notag\\
  &\quad+C\tau^2 \norm{J}_{H^r}\norm{\omega}_{Y_{r,\tau,s}}\norm{J}_{Y_{r,\tau,s}} +C\tau^2\norm{\omega}_{H^r}\norm{J}_{Y_{r,\tau,s}}^2,
\end{align}
where $C$ is a constant depending only on $r, s$ for $r>\frac{5}{2}+\frac{3}{2s}$.
Combining \eqref{3.26}, \eqref{3.27} and \eqref{3.28}, we have the estimate of ${T}_{h,\omega,J}^{(2)}$. Then with the estimate \eqref{3.24} of $T_{h,\omega,J}^{(1)}$, we have
\begin{align}\label{3.29}
  \abs{{T}_{h,\omega,J}} &\leq \abs{{T}^{(1)}_{h,\omega,J}}+\abs{{T}^{(2)}_{h,\omega,J}}  \notag\\
  & \leq  \abs{{T}^{(1)}_{h,\omega,J}}+\abs{{R}_{h,\omega,J}^{(1)}}+\abs{{R}_{h,\omega,J}^{(2)}}+\abs{{R}_{h,\omega,J}^{(3)}}\notag\\
  & \leq \big[C\norm{\nabla h}_{L^\infty}\norm{\Psi}_{X_{r,\tau,s}}+C(1+\tau)\norm{\Psi}_{H^r}^2\big]\norm{\Psi}_{X_{r,\tau,s}}\notag\\
  & \quad+\big[C\tau\norm{\nabla h}_{L^\infty}+C\tau^2(\norm{\Psi}_{H^r}+\norm{\Psi}_{X_{r,\tau,s}})\big]\norm{\Psi}_{Y_{r,\tau,s}}^2.
\end{align}
Symmetrically we have
\begin{align}\label{3.30}
  \abs{{T}_{h,J,\omega}} &\leq \abs{{T}^{(1)}_{h,J,\omega}}+\abs{{T}^{(2)}_{h,J,\omega}}  \notag\\
  & \leq  \abs{{T}^{(1)}_{h,J,\omega}}+\abs{{R}_{h,J,\omega}^{(1)}}+\abs{{R}_{h,J,\omega}^{(2)}}+\abs{{R}_{h,J,\omega}^{(3)}}\notag\\
  & \leq \big[C\norm{\nabla h}_{L^\infty}\norm{\Psi}_{X_{r,\tau,s}}+C(1+\tau)\norm{\Psi}_{H^r}^2\big]\norm{\Psi}_{X_{r,\tau,s}}\notag\\
  & \quad+\big[C\tau\norm{\nabla h}_{L^\infty}+C\tau^2(\norm{\Psi}_{H^r}+\norm{\Psi}_{X_{r,\tau,s}})\big]\norm{\Psi}_{Y_{r,\tau,s}}^2.
\end{align}
Combining \eqref{3.29} and \eqref{3.30}, we proved \eqref{3.21}.
\end{proof}
\begin{proof}[Proof of \eqref{3.22}]
It suffices to estimate $\inner{J\cdot\nabla h,\Lambda_m^{2r}e^{2\tau\Lambda_m^{1/s}}\omega}_{L^2(\mathbb{T}^3)}$, since the other term $\inner{\omega\cdot\nabla h,\Lambda_m^{2r}e^{2\tau\Lambda_m^{1/s}}J}_{L^2(\mathbb{T}^3)}$ can be estimated in similar way (replacing the position of $\omega$ and $J$).
First of all, we note that $\norm{J}_{L^\infty}\leq \norm{\nabla h}_{L^\infty}$,
\begin{align}\label{3.31}
  & \abs{\inner{\Lambda_m^r e^{\tau\Lambda_m^{1/s}}J\cdot\nabla h,\Lambda_m^r e^{\tau\Lambda_m^{1/s}}\omega}_{L^2(\mathbb{T}^3)}}+\abs{\inner{J\cdot\nabla\Lambda_m^r e^{\tau\Lambda_m^{1/s}} h,\Lambda_m^r e^{\tau \Lambda_m^{1/s}}\omega}_{L^2(\mathbb{T}^3)}} \notag\\
   &\leq C\norm{\nabla h}_{L^\infty} \norm{J}_{X_{r,\tau,s}}\norm{\Lambda_m^r e^{\tau\Lambda_m^{1/s}}\omega}_{L^2}.
\end{align}
Then like \eqref{3.15}, we substract $\inner{J\cdot\nabla h,\Lambda_m^{2r}e^{2\tau\Lambda_m^{1/s}}\omega}_{L^2(\mathbb{T}^3)}$ by
$$
\inner{\Lambda_m^r e^{\tau\Lambda_m^{1/s}}J\cdot\nabla h,\Lambda_m^r e^{\tau\Lambda_m^{1/s}}\omega}_{L^2(\mathbb{T}^3)}+\inner{J\cdot\nabla \Lambda_m^r e^{\tau\Lambda_m^{1/s}}h,\Lambda_m^r e^{\tau\Lambda_m^{1/s}}\omega}_{L^2(\mathbb{T}^3)}
$$
and we consider their differences
\begin{align}\label{3.32}
  &\inner{J\cdot\nabla h,\Lambda_m^{2r}e^{2\tau\Lambda_m^{1/s}}\omega}_{L^2(\mathbb{T}^3)}-\inner{\Lambda_m^r e^{\tau\Lambda_m^{1/s}}J\cdot\nabla h,\Lambda_m^r e^{\tau\Lambda_m^{1/s}}\omega} _{L^2(\mathbb{T}^3)}\notag\\
  &\quad-\inner{J\cdot\nabla \Lambda_m^r e^{\tau\Lambda_m^{1/s}}h,\Lambda_m^r e^{\tau\Lambda_m^{1/s}}\omega}_{L^2(\mathbb{T}^3)} \notag\\
  &=i(2\pi)^3\sum_{j+k+\ell=0}(\abs{\ell_m}^r-\abs{j_m}^r)(e^{\tau\abs{\ell_m}^{1/s}}-e^{\tau\abs{k_m}^{1/s}})(\hat{J}_j\cdot k)(\hat{h}_k\cdot\hat{\omega}_\ell)\abs{\ell_m}^r e^{\tau\abs{\ell_m}^{1/s}} \notag\\
  &\quad+i(2\pi)^3\sum_{j+k+\ell=0}(\abs{\ell_m}^r-\abs{k_m}^r-\abs{j_m}^r)e^{\tau\abs{k_m}^{1/s}}(\hat{J}_j\cdot k)(\hat{h}_k\cdot\hat{\omega}_\ell)\abs{\ell_m}^r e^{\tau\abs{\ell_m}^{1/s}} \notag\\
  &\quad+i(2\pi)^3\sum_{j+k+\ell=0}\abs{j_m}^r(e^{\tau\abs{\ell_m}^{1/s}}-e^{\tau\abs{j_m}^{1/s}})(\hat{J}_j\cdot k)(\hat{h}_k\cdot\hat{\omega}_\ell)\abs{\ell_m}^r e^{\tau\abs{\ell_m}^{1/s}} \notag\\
  &:=\mathcal{T}_{J,h,\omega}^{(1)}+\mathcal{T}_{J,h,\omega}^{(2)}+\mathcal{T}_{J,h,\omega}^{(3)}.
\end{align}
It rested to estimate the right hand side of \eqref{3.32}. Analogue to \eqref{3.15}, the three terms $\mathcal{T}_{J,h,\omega}^{(1)}$, $\mathcal{T}_{J,h,\omega}^{(2)}$ and $\mathcal{T}_{J,h,\omega}^{(3)}$ are estimated in the same way. Then we directly have 
\begin{align}\label{3.33}
  \abs{\mathcal{T}_{J,h,\omega}^{(1)}} & \leq C\tau\norm{J}_{H^r}^2\norm{\Lambda_m^r e^{\tau\Lambda_m^{1/s}}\omega}_{L^2}\notag \\
  &\quad+C\tau^2 \norm{J}_{X_{r,\tau,s}}\norm{J}_{Y_{r,\tau,s}}\norm{\Lambda_m^{r+\frac{1}{2s}}e^{\tau\Lambda_m^{1/s}}\omega}_{L^2},
\end{align}
and
\begin{equation}\label{3.34}
  \abs{\mathcal{T}_{J,h,\omega}^{(2)}}\leq C\norm{J}_{H^r}^2 \norm{\Lambda_m^r e^{\tau\Lambda_m^{1/s}}\omega}_{L^2}+C\tau^2 \norm{J}_{H^r}\norm{J}_{Y_{r,\tau,s}}\norm{\Lambda_m^{r+\frac{1}{2s}} e^{\tau\Lambda_m^{1/s}}\omega}_{L^2},
\end{equation}
and
\begin{align}\label{3.35}
  \abs{\mathcal{T}_{J,h,\omega}^{(3)}} &\leq C\tau\norm{J}_{H^r}^2 \norm{\Lambda_m^r e^{\tau\Lambda_m^{1/s}}\omega}_{L^2} \notag\\
  &\quad+ C\tau^2\norm{J}_{X_{r,\tau,s}}\norm{J}_{Y_{r,\tau,s}}
  \norm{\Lambda_m^{r+\frac{1}{2s}}e^{\tau\Lambda_m^{1/s}}\omega}_{L^2},
\end{align}
for an appropriate constant $C$.
Combining \eqref{3.33}, \eqref{3.34} and \eqref{3.35} with \eqref{3.31}, we have the estimate
\begin{align}\label{3.36}
  &\abs{\inner{J\cdot\nabla h,\Lambda_m^{2r}e^{2\tau\Lambda_m^{1/s}}\omega}_{L^2(\mathbb{T}^3)}}\notag\\
   & \leq C\norm{\nabla h}_{L^\infty} \norm{J}_{X_{r,\tau,s}}\norm{\Lambda_m^r e^{\tau\Lambda_m^{1/s}}\omega}_{L^2}\notag \\
  &\quad+ C\tau\norm{\Psi}_{H^r}^2\norm{\Lambda_m^r e^{\tau\Lambda_m^{1/s}}J}_{L^2}\notag \\
  &\quad+ C\tau^2(\norm{J}_{H^r}+\norm{J}_{X_{r,\tau,s}})\norm{J}_{Y_{r,\tau,s}}\norm{\Lambda_m^{r+\frac{1}{2s}}e^{\tau\Lambda_m^{1/s}}\omega}_{L^2}.
\end{align}
Symmetrically, we have
\begin{align}\label{3.37}
  &\abs{\inner{\omega\cdot\nabla h,\Lambda_m^{2r}e^{2\tau\Lambda_m^{1/s}}J}_{L^2(\mathbb{T}^3)}} \notag\\
  & \leq C\norm{\nabla h}_{L^\infty} \norm{\omega}_{X_{r,\tau,s}}\norm{\Lambda_m^r e^{\tau\Lambda_m^{1/s}}J}_{L^2}\notag \\
  &\quad+C\norm{\nabla u}_{L^\infty} \norm{J}_{X_{r,\tau,s}}^2+ C\tau\norm{\omega}_{H^r}\norm{J}_{H^r}\norm{\Lambda_m^r e^{\tau\Lambda_m^{1/s}}\omega}_{L^2}\notag \\
  &\quad+ C\tau^2(\norm{\Psi}_{H^r}+\norm{\Psi}_{X_{r,\tau,s}})\norm{\Psi}_{Y_{r,\tau,s}}\norm{\Lambda_m^{r+\frac{1}{2s}}e^{\tau\Lambda_m^{1/s}}J}_{L^2}.
\end{align}
Combining \eqref{3.36} with \eqref{3.37}, we have
\begin{align*}
  & \abs{\inner{J\cdot\nabla h,\Lambda_m^{2r}e^{2\tau\Lambda_m^{1/s}}\omega}_{L^2(\mathbb{T}^3)}}+\abs{\inner{\omega\cdot\nabla h,\Lambda_m^{2r}e^{2\tau\Lambda_m^{1/s}}J}_{L^2(\mathbb{T}^3)}} \\
  &\leq C(\norm{\nabla u}_{L^\infty}+\norm{\nabla h}_{L^\infty})\norm{\Psi}_{X_{r,\tau,s}}^2+C\tau\norm{\Psi}_{H^r}^2\norm{\Psi}_{X_{r,\tau,s}}\\
  &\qquad+C\tau^2 (\norm{\Psi}_{H^r}+\norm{\Psi}_{X_{r,\tau,s}})\norm{\Psi}_{Y_{r,\tau,s}}^2
\end{align*}
Then \eqref{3.22} is proved.
\end{proof}

\section{Proof of Theorem \ref{Theorem 2.1}}\label{Section 4}
In this Section, we will give the proof of the main theorem. Here we present only a priori estimate, since the rigorous construction of the solution follows from the standard Galerkin approximation.
\begin{proof}[Proof of Theorem \ref{Theorem 2.1}]
For simplicity of presentation we suppress the time dependence of $\tau, u, h, \omega$ and $J$ on $t$.
As usual, let $m\in\{1,2,3\}$, let us take the $L^2$ inner product of the first equation of \eqref{1.2} with $\Lambda_m^{2r} e^{2\tau\Lambda_m^{1/s}}\omega$, and the second equation of \eqref{1.2} with $\Lambda_m^{2r}e^{2\tau\Lambda_m^{1/s}}J$ respectively,
\begin{align}\label{4.1}
  &\inner{\partial_t \omega,\Lambda_m^{2r}e^{2\tau\Lambda_m^{1/s}}\omega}_{L^2(\mathbb{T}^3)} +\inner{u\cdot\nabla\omega-\omega\cdot\nabla u,\Lambda_m^{2r}e^{2\tau\Lambda_m^{1/s}}\omega}_{L^2(\mathbb{T}^3)}\notag \\
  &-\inner{h\cdot\nabla J,\Lambda_m^{2r}e^{2\tau\Lambda_m^{1/s}}\omega}_{L^2(\mathbb{T}^3)}+\inner{J\cdot\nabla h,\Lambda_m^{2r}e^{2\tau\Lambda_m^{1/s}}\omega}_{L^2(\mathbb{T}^3)}=0,
\end{align}
and
\begin{align}\label{4.2}
  &\inner{\partial_t J,\Lambda_m^{2r}e^{2\tau\Lambda_m^{1/s}}J}_{L^2(\mathbb{T}^3)} +\inner{u\cdot\nabla J+J\cdot\nabla u,\Lambda_m^{2r}e^{2\tau\Lambda_m^{1/s}}J}_{L^2(\mathbb{T}^3)}\notag \\
  &-\inner{h\cdot\nabla \omega,\Lambda_m^{2r}e^{2\tau\Lambda_m^{1/s}}J}_{L^2(\mathbb{T}^3)}-\inner{\omega\cdot\nabla h,\Lambda_m^{2r}e^{2\tau\Lambda_m^{1/s}}J}_{L^2(\mathbb{T}^3)}=0.
\end{align}
Adding \eqref{4.1} and \eqref{4.2} together, we have
\begin{equation}\label{4.3}
\frac{1}{2}\frac{d}{dt}\norm{\Lambda_m^r e^{\tau\Lambda_m^{1/s}}\Psi}_{L^2}^2 =\dot{\tau}(t)\norm{\Lambda_m^{r+\frac{1}{2s}} e^{\tau\Lambda_m^{1/s}}\Psi}_{L^2}^2+K_1+K_2+K_3,
\end{equation}
where $K_1, K_2$ and $K_3$ are as follows,
\begin{align*}
  K_1 &=-\inner{u\cdot\nabla\omega-\omega\cdot\nabla u,\Lambda_m^{2r}e^{2\tau\Lambda_m^{1/s}}\omega}_{L^2(\mathbb{T}^3)}-\inner{u\cdot\nabla J+J\cdot\nabla u,\Lambda_m^{2r}e^{2\tau\Lambda_m^{1/s}}J}_{L^2(\mathbb{T}^3)}\notag \\
  K_2 &=\inner{h\cdot\nabla J,\Lambda_m^{2r}e^{2\tau\Lambda_m^{1/s}}\omega}_{L^2(\mathbb{T}^3)}+\inner{h\cdot\nabla \omega,\Lambda_m^{2r}e^{2\tau\Lambda_m^{1/s}}J}_{L^2(\mathbb{T}^3)}\notag \\
  K_3 &=-\inner{J\cdot\nabla h,\Lambda_m^{2r}e^{2\tau\Lambda_m^{1/s}}\omega}_{L^2(\mathbb{T}^3)}+\inner{\omega\cdot\nabla h,\Lambda_m^{2r}e^{2\tau\Lambda_m^{1/s}}J}_{L^2(\mathbb{T}^3)}.
\end{align*}
By the \eqref{3.1} in Lemma \ref{Lemma 3.4} and \eqref{3.2} in Lemma \ref{lemma 3.5}, we have
\begin{align*}
  \abs{K_1} &\leq C(\tau\norm{\nabla u}_{L^\infty}+\tau^2\norm{\Psi}_{H^r}+\tau^2\norm{\Psi}_{X_{r,\tau,s}})\norm{\Psi}_{Y_{r,\tau,s}}^2\notag\\
                  &\qquad+C\big(\norm{\nabla u}_{L^\infty}\norm{\Psi}_{X_{r,\tau,s}}+(1+\tau)
                  \norm{\Psi}_{H^r}^2\big)\norm{\Psi}_{X_{r,\tau,s}}\,.
\end{align*}
By \eqref{3.21} in the Lemma \ref{Lemma 3.6}, we have
\begin{align*}
  \abs{K_2} &\leq C(\tau\norm{\nabla h}_{L^\infty}+\tau^2\norm{\Psi}_{H^r}+\tau^2\norm{\Psi}_{X_{r,\tau,s}})\norm{\Psi}_{Y_{r,\tau,s}}^2\notag\\
                  &\qquad+C\big(\norm{\nabla h}_{L^\infty}\norm{\Psi}_{X_{r,\tau,s}}+(1+\tau)
                  \norm{\Psi}_{H^r}^2\big)\norm{\Psi}_{X_{r,\tau,s}}\,.
\end{align*}
By \eqref{3.22} in the Lemma \ref{Lemma 3.6}, we have
\begin{align*}
  \abs{K_3} & \leq C\norm{\nabla h}_{L^\infty}\norm{\Psi}_{X_{r,\tau,s}}^2+C\tau\norm{\Psi}_{H^r}^2\norm{\Psi}_{X_{r,\tau,s}}\\
  &\qquad+C\tau^2 (\norm{\Psi}_{H^r}+\norm{\Psi}_{X_{r,\tau,s}})\norm{\Psi}_{Y_{r,\tau,s}}^2\,.
\end{align*}
Substituting $K_1, K_2, K_3$ into \eqref{4.3}, we have
\begin{align*}
  \frac{1}{2}\frac{d}{dt} &\norm{\Lambda_m^r e^{\tau\Lambda_m^{1/s}}\Psi}_{L^2}^2 \leq \dot{\tau}(t)\norm{\Lambda_m^{r+\frac{1}{2s}} e^{\tau\Lambda_m^{1/s}}\Psi}_{L^2}^2\notag\\
  &+C\big[(\norm{\nabla u}_{L^\infty}+\norm{\nabla h}_{L^\infty})\norm{\Psi}_{X_{r,\tau,s}}+(1+\tau)\norm{\Psi}_{H^r}^2\big]\norm{\Psi}_{X_{r,\tau,s}}  \notag\\
   &+C\big[\tau(\norm{\nabla u}_{L^\infty}+\norm{\nabla h}_{L^\infty})\norm{\Psi}_{X_{r,\tau,s}}+\tau^2(\norm{\Psi}_{H^r}+
   \norm{\Psi}_{X_{r,\tau,s}})\big]\norm{\Psi}_{Y_{r,\tau,s}}^2\,.
\end{align*}
Taking summation from $m=1$ to $m=3$, we have
\begin{align*}
  \frac{1}{2}&\frac{d}{dt} \norm{\Psi}_{X_{r,\tau,s}}^2 \leq C\big[(\norm{\nabla u}_{L^\infty}+\norm{\nabla h}_{L^\infty})\norm{\Psi}_{X_{r,\tau,s}}+(1+\tau)\norm{\Psi}_{H^r}^2\big]\norm{\Psi}_{X_{r,\tau,s}}  \notag\\
   &+\big[\dot{\tau}+C\tau(\norm{\nabla u}_{L^\infty}+\norm{\nabla h}_{L^\infty})+C\tau^2(\norm{\Psi}_{H^r}+
   \norm{\Psi}_{X_{r,\tau,s}})\big]\norm{\Psi}_{Y_{r,\tau,s}}^2\,.
\end{align*}
If $\tau(t)$ is a decreasing function of $t$ such that
\begin{equation}\label{4.9}
  \dot{\tau}+C\tau(\norm{\nabla u}_{L^\infty}+\norm{\nabla h}_{L^\infty})\norm{\Psi}_{X_{r,\tau,s}}+C\tau^2(\norm{\Psi}_{H^r}+\norm{\Psi}_{X_{r,\tau,s}})\leq 0
\end{equation}
Then we have
\begin{equation}\label{4.10}
  \frac{d}{dt} \norm{\Psi}_{X_{r,\tau(t)}} \leq C(\norm{\nabla u}_{L^\infty}+\norm{\nabla h}_{L^\infty})\norm{\Psi}_{X_{r,\tau,s}}+C(1+\tau(0))\norm{\Psi}_{H^r}^2.
\end{equation}
By standard $H^r$-energy estimate one can obtain that there exists a constant $\tilde{C}>0$ depending on $r$ such that
\begin{equation}\label{4.10+}
  \norm{\Psi(\cdot,t)}_{H^r}\leq \norm{\Psi_0}_{H^r}\exp\big(\int_{0}^{t}\tilde{C}(\norm{\nabla u(\cdot,\sigma)}_{L^\infty}+\norm{\nabla h(\cdot,\sigma)}_{L^\infty})d \sigma\big),
\end{equation}
for $0<t<T$.
We now let the constant $C$ large enough such that \eqref{4.10+} holds. By Grownwall's inequality in \eqref{4.10}, we have
\begin{equation*}
\begin{split}
  \norm{\Psi(\cdot,t)}_{X_{r,\tau(t),s}} &\leq G(t)\bigg[\norm{\Psi_0}_{X_{r,\tau(0),s}}+C(1+\tau(0))\int_0^t \norm{\Psi(\cdot,\sigma)}_{H^r}^2 G^{-1}(\sigma)d\sigma \bigg] \\
      &:=M(t)\\
      &\leq G(t)\big(\norm{\Psi_0}_{X_{r,\tau_0,s}}+C_{\tau_0}\norm{\Psi_0}_{H^r}^2 t \big),
\end{split}
\end{equation*}
where we denote
$$
G(t)=\exp\bigg(\int_{0}^{t}C(\norm{\nabla u(\cdot,\sigma)}_{L^\infty}+\norm{\nabla h(\cdot,\sigma)}_{L^\infty}) d\sigma\bigg)
$$
and $C_{\tau_0}=C(1+\tau(0))$.
A sufficient condition \eqref{4.9} to hold is that $\tau$ satisfies
\begin{equation*}
  \dot{\tau}(t)+C\tau(\norm{\nabla u}_{L^\infty}+\norm{\nabla h}_{L^\infty})+C\tau^2 \norm{\Psi}_{H^r}+C\tau^2 M(t)= 0,
\end{equation*}
for all $0<t<T$. It suffices to set
\begin{equation*}
  \tau(t) =G(t)^{-1}\bigg[\tau(0)^{-1}+C\int_{0}^{t}\big(\norm{\Psi(\cdot,\sigma)}_{H^r}+M(\sigma) \big)G(\sigma)^{-1}d\sigma \bigg]^{-1}.
\end{equation*}
In particular, since $\norm{\Psi(\cdot,t)}_{H^r}^2\leq \norm{\Psi_0}_{H^r}^2 G(t)$, we obtain
\begin{equation*}
\begin{split}
  \tau(t) &\geq G(t)^{-1}\bigg\{\tau_0^{-1}+C\int_{0}^{t}\big[\norm{\Psi_0}_{H^r}+(\norm{\Psi_0}_{X_{r,\tau_0,s}}+C_{\tau_0}\norm{\Psi_0}^2_{H^r}\sigma )\big]d\sigma\bigg\}^{-1} \\
     &\geq G(t)^{-1} \big(\tau_0^{-1}+C_0 t+\frac{C_1}{2}t^2\big)^{-1}
\end{split}
\end{equation*}
where $C_0=C(\norm{\Psi_0}_{H^r}+\norm{\Psi_0}_{X_{r,\tau_0,s}})$ and the constant $C_1=\frac{CC_{\tau_0}\norm{\Psi_0}^2_{H^r}}{2}$.
\end{proof}

\bigskip
\noindent{\bf Acknowledgements.}
The research of the second author is supported partially by
``The Fundamental Research Funds for Central Universities of China".

\end{document}